\newtheorem{theorem}{Theorem}
\newtheorem{definition}{Definition}
\newtheorem{lemma}{Lemma}
\newtheorem{remark}{Remark}
\newtheorem*{corollary}{Corollary}
\begin{document}

\preprint{AIP/123-QED}

\title[Analyzing Topological Mixing and Chaos on Continua with Symbolic Dynamics]{Analyzing Topological Mixing and Chaos on Continua with Symbolic Dynamics}
% Force line breaks with \\
\author{Arnaldo Rodriguez-Gonzalez}
% \altaffiliation[Also at ]{Physics Department, XYZ University.}%Lines break automatically or can be forced with \\
\email{ajr295@cornell.edu}
\noaffiliation
\affiliation{Sibley School of Mechanical \& Aerospace Engineering, Cornell University, Ithaca NY}

\date{\today}% It is always \today, today,
             %  but any date may be explicitly specified

\begin{abstract}
This work describes the way that topological mixing and chaos in continua, as induced by discrete dynamical systems, can or can't be understood through topological conjugacy with symbolic dynamical systems. For example, there is no symbolic dynamical system that is topologically conjugate to any discrete dynamical system on an entire continuum, and there is no finer topology that can be given to such a continuum system which can make it topologically conjugate to a symbolic dynamical system. However, this paper demonstrates an analytical mechanism by which the existence of topological mixing and/or chaos can be shown through conjugacy with qualitative dynamical systems outside the usual purview of symbolic dynamics. Two examples of this mechanism are demonstrated on classic textbook models of chaotic dynamics; the first proving the existence of topological mixing everywhere in the dyadic map on the interval by showing that there exists a qualitative system that is topologically conjugate to the dyadic map on the interval with a finer topology than the usual Euclidean topology, and the other following a similar approach to demonstrate the existence of Devaney chaos everywhere in the $2$-tent map on the interval. The content is presented in a somewhat self-contained fashion, reiterating some standard results in the field, to aid new learners of topological mixing/chaos.
\end{abstract}
\maketitle

\section{\label{sec:intro}Introduction}

The study of mixing on continua is an important application of dynamical systems theory; proving the existence of topological transitivity/mixing is a key part of the analysis of mixing in fluids\cite{ottino1989kinematics}, often by invoking dynamical equivalence through topological conjugacy to celebrated examples of dynamical systems that are chaotic within a continuum such as the dyadic map, tent map, or logistic map for certain parameter values\cite{Bringer2004}. Similarly, establishing a topological conjugacy between a modeled system and a symbolic dynamical system has proven to be an extraordinarily fruitful mechanism for diagnosing the existence of topological mixing and other chaotic behaviors inside of dynamical systems, with examples including the confirmation of chaotic behavior in the Newtonian three-body problem\cite{koon2006dynamical}, in pendulum-like systems\cite{zehnder2010lectures,pendulumthing}, and in general systems exhibiting hyperbolic dynamics\cite{guckenheimer2014nonlinear}.

That said, most examples of applying this conjugacy between a system exhibiting chaos/mixing somewhere within a continuum and a symbolic dynamical system rely on merely establishing semi-conjugacies, which fails to retain a one-to-one correspondence between the orbits in the continuum system and the qualitative description of those orbits in the symbolic system. Another challenge to the traditional approach is that the regions in which this chaos/mixing can be proved to occur, as prescribed by conjugacy to the symbolic system, are always infinitesimally "small" due to topological constraints induced by the conjugacy. An example of this is shown in a canonical chaotic dynamical system—--the $3$-tent map---in Figure \ref{fig:tentmap3}. This notion of smallness is formalized by the notion of punctiformity\cite{steen2013counterexamples}, which is discussed in Section \ref{sec:2} of this text.

\begin{figure}
\includegraphics[width=0.47\textwidth,keepaspectratio]{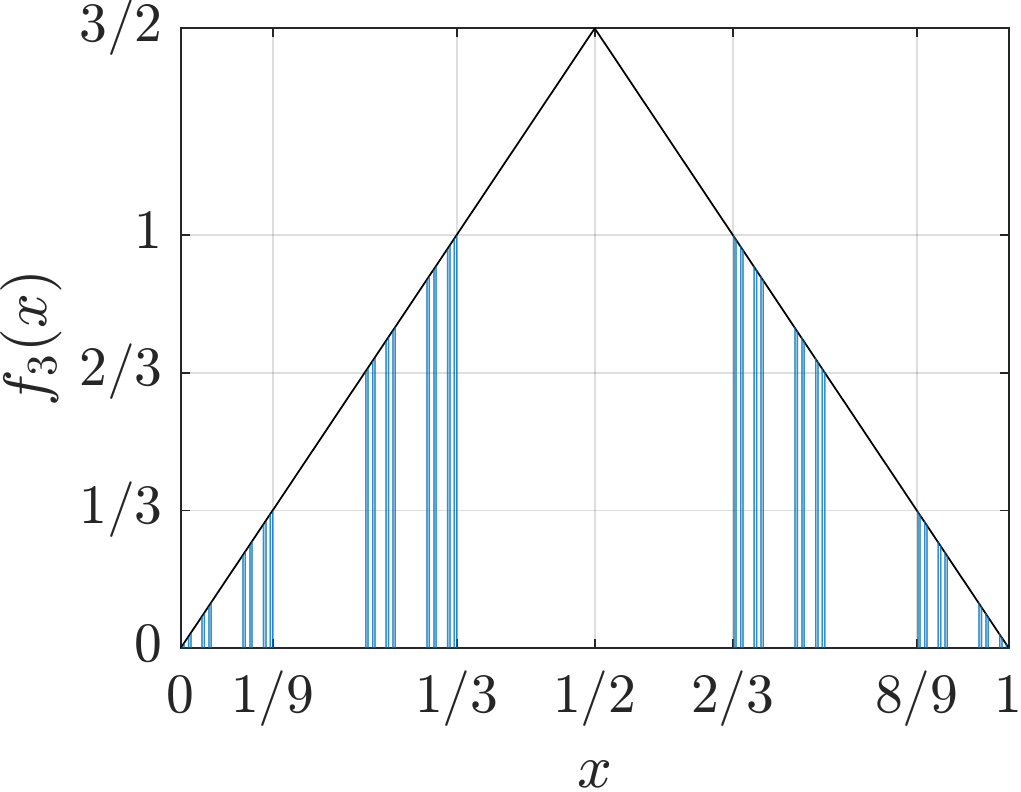}% Here is how to import EPS art
\caption{\label{fig:tentmap3} A graph of the $3$-tent map $f_3$ on the unit interval $[0,1]$. The vertical lines indicates the values of $x\in [0,1]$ for which the mapping is mixing \& chaotic; all other elements in $x$ are eventually mapped outside the unit interval. Therefore, for almost all $x\in [0,1]$, the map does not exhibit mixing or chaos (as defined in Section \ref{topconj} of this paper); the subspace of the interval on which these behaviors do occur is the standard Cantor set, and is therefore punctiform. For proofs of this and further discussion on the $3$-tent map, see Banks \& Dragan\cite{BanksDragan}.}
\end{figure}

This paper describes a method that allows a dynamicist to prove the existence of chaos/mixing everywhere in a continuum by altering the continuum's topology to be finer, and establishing a topological conjugacy between the altered system and a qualitative dynamical system, avoiding both of the analytical pitfalls mentioned previously. The method is then demonstrated on two examples on classic textbook systems in Sections \ref{sec:mixing} and \ref{chaos}. The following discussions assume the reader is familiar with topology and topological properties; see Appendix A for a primer on these topics. Also, this text focuses exclusively on topological/metric notions of chaos; their measure-theoretic analogues\cite{RevModPhys.57.617} are not discussed herein.
\subsection{\label{sec:level2}Qualitative \& symbolic dynamical systems}
A broad definition of a qualitative dynamical system will be mostly useful in the latter part of this text, and is introduced to contrast with the formal definition of a symbolic dynamical system below, which is identical to that found in most specialized literature on the subject\cite{lind_marcus_1995,Kitchens1998,wiggins_2010}:
\begin{definition}
\label{def:1}
A qualitative dynamical system $(Q,\ell,\sigma)$ consists of a 3-tuple:
\begin{enumerate}
    \item A set $Q$ of infinite symbol sequences, composed from a finite set of symbols $\Sigma$.
    \item A metric $\ell$ defining the distance between elements of $Q$.
    \item The shift operator $\sigma$ which, upon acting on an element of $Q$, "deletes" the first symbol of the infinite symbol sequence and shifts the position of the remaining symbols "forward".
\end{enumerate}
In addition, the metric space $(Q,\ell)$ must always have the metric topology that is equal to the product topology of $\Sigma^\mathbb{N}$.
\newline
\newline
This qualitative dynamical system is also considered a symbolic dynamical system, or subshift\cite{berthe_rigo_2016}, if and only if $(Q,\ell)$ under this topology is compact and $Q$ is shift-invariant ($\sigma(Q) = Q$).
\end{definition}
A more straightforward way of visualizing the topology of the underlying metric spaces described above is by considering that every subset of elements of $Q$ that share the same "prefix" sequence is an open set, and that this type of open set forms a basis for the metric topology of $(Q,\ell)$; they are commonly referred to as the cylinder sets\cite{lind_marcus_1995,Kitchens1998,robinson1998dynamical} of this topology.

The following topological property of qualitative dynamical systems is also important:
\begin{remark}
If $(Q,\ell,\sigma)$ is a qualitative dynamical system, $(Q,\ell)$ is a totally disconnected metric space.
\label{rem:1}
\end{remark}
\begin{proof}Given a subset $Y \subseteq Q$, consider any element $y \in Y$. There must exist some integer $n$ such that the prefix of length $n$ of $y$ is not shared by any other element of $Y$. The cylinder set $U$ of all elements in $Q$ that share that prefix, and the union $V$ of the cylinder sets representing every other prefix of length $n$ present in $X$, are disjoint and contain $Y$ in their union. Since this is true for all subsets of $Q$, $(Q,\ell)$ is a totally disconnected metric space.
\end{proof}
\subsection{\label{Continua}Dynamical systems on continua}
The discussion in this paper is focused on discrete dynamical systems defined over continua; in a way, they are analogues for models of dynamical systems that are "continuous" in space but discrete in time. This can be made rigorous by defining the notion of a continuum, and a discrete dynamical system, precisely below:
\begin{definition}
\label{def:2}
A continuum is a compact, connected metric space with uncountably many elements.
\end{definition}
\begin{definition}
\label{def:3}
A discrete dynamical system $(X,d,f)$ consists of a 3-tuple:
\begin{enumerate}
    \item An underlying set $X$.
    \item A metric $d$ defining the distance between elements of $X$.
    \item A function $f: X\to X$ that "evolves" the elements of $X$.
\end{enumerate}
Notice that qualitative dynamical systems, as defined in Definition \ref{def:1}, are automatically discrete dynamical systems.
\end{definition}
By natural extension, a discrete dynamical system on a continuum is simply a discrete dynamical system $(X,d,f)$ where the underlying metric space $(X,d)$ is a continuum as described in Definition \ref{def:2}. 
\subsection{\label{topconj}Mixing, chaos, and topological conjugacy}
Having discussed what the dynamical systems of interest for this text are, it follows naturally to describe the notion of mixing and chaos rigorously within them. Any construction of mixing should follow the intuitive notion that the "contents" of any two "regions" in the system overlap after some finite amount of time; by describing "regions" in the system as open sets in a metric space, this idea is precisely what the notions of topological transitivity and topological mixing aim to convey.
\begin{definition}
\label{def:4}
A discrete dynamical system $(X,d,f)$ is topologically transitive if and only if, for every pair of open sets $U$ and $V$ in the metric topology of $(X,d)$, there exists an integer $n\geq 0$ such that $f^n(U) \cap V \neq \emptyset$.
\end{definition}
\begin{definition}
\label{def:5}
A discrete dynamical system $(X,d,f)$ is topologically mixing if and only if, for every pair of open sets $U$ and $V$ in the metric topology of $(X,d)$, there exists an integer $n\geq 0$ such that $f^N(U) \cap V \neq \emptyset$ for all integer $N \geq n$.
\end{definition}
Transitivity is widely considered one of the hallmarks of chaotic behavior, perhaps second only to the notion of sensitivity---the idea that arbitrarily close elements in a metric space eventually separate past a specific distance.
\begin{definition} 
\label{def:6}
Consider a discrete dynamical system $(X,d,f)$. This system is sensitive if and only if, for every element $x \in X$ and any non-negative number $\delta$, there exists another element $y \in X$ and a positive integer $n$ such that $d(x,y) < \delta$ but $d(f^n(x),f^n(y)) > \epsilon$ for some non-negative real number $\epsilon$.
\end{definition}
There is one other property that is sometimes considered a requirement for chaos, although it is less "intuitively chaotic" than the others; the idea that there are periodic orbits "everywhere" in the system.
\begin{definition}
\label{def:7} Consider a discrete dynamical system $(X,d,f)$. This system has dense periodic orbits if and only if, in every open set $U$ in the metric topology of $(X,d)$, there exists an element $x \in U$ such that $f^n(x) = x$ for some positive integer $n$.
\end{definition}
These three properties in conjunction are what we will use to define chaos:
\begin{definition}
\label{def:8}
    Consider a discrete dynamical system $(X,d,f)$. $(X,d,f)$ is chaotic, or Devaney chaotic\cite{Devaney2021}, if and only if it is topologically transitive, sensitive, and has dense periodic orbits.
\end{definition}
Although some authors prefer to impose further or alternative restrictions for chaoticity (for example, requiring $(X,d)$ to be compact\cite{wiggins_2010}), the definition here is chosen to be as general as reasonably possible with the understanding that some systems which are chaotic in this sense may not exhibit intuitively chaotic behavior. 

One of the reasons that the idea of dense periodic orbits is considered in the study of chaos is because of the following convenient result:
\begin{theorem}[Banks-Brooks-Cairns-Davis-Stacey Theorem\cite{Banks1992}]
\label{thm:1}Suppose $(X,d,f)$ was a discrete dynamical system and $f$ is continuous on $(X,d)$. If $(X,d,f)$ is topologically transitive and has dense periodic orbits, then $(X,d,f)$ is sensitive.
\end{theorem}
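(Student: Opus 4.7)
The plan is to prove the theorem by first extracting a uniform sensitivity constant from the geometry of the periodic orbit structure, and then using transitivity to force separation. The key insight is that, although sensitivity is a statement about every point $x \in X$, the witnessing constant $\epsilon$ can be produced once and for all from the existence of two disjoint periodic orbits.

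First I would establish the following preliminary observation: there exists a constant $\epsilon_0 > 0$ such that for every $x \in X$, there is a periodic point $q$ whose entire orbit $\mathrm{Orb}(q)$ satisfies $d(x, \mathrm{Orb}(q)) \geq 8\epsilon_0$. This uses dense periodic orbits only mildly: it suffices to pick any two periodic points $p_1, p_2$ with disjoint orbits (which must exist, since otherwise all periodic points would share a single finite orbit, and density would force $X$ to be finite, contradicting the generality of the system when $X$ is, say, a continuum; alternatively, this can be verified directly by contradiction against the assumption of sensitivity being nontrivial to prove). Then by the triangle inequality, at least one of $\mathrm{Orb}(p_1)$, $\mathrm{Orb}(p_2)$ is at distance at least half of $d(\mathrm{Orb}(p_1), \mathrm{Orb}(p_2))$ from any given $x$, yielding the claim with $\epsilon_0 = \tfrac{1}{16}\, d(\mathrm{Orb}(p_1), \mathrm{Orb}(p_2))$.

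Next I would fix an arbitrary $x \in X$ and an arbitrary $\delta > 0$, and produce a witness to sensitivity with constant $\epsilon_0$. By density of periodic orbits, there is a periodic point $p \in B(x, \delta)$ of some period $n$. By the preliminary step, there is a periodic point $q$ whose orbit avoids $B(x, 8\epsilon_0)$. Define the open neighborhood of $q$
\[
W = \bigcap_{i=0}^{n} f^{-i}\bigl(B(f^i(q), \epsilon_0)\bigr),
\]
which is nonempty and open by continuity of $f$. Topological transitivity applied to the open sets $B(x, \delta)$ and $W$ yields a point $y \in B(x, \delta)$ and an integer $k \geq 0$ with $f^k(y) \in W$. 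Choose the smallest integer $j$ with $jn \geq k$, so that $0 \leq jn - k \leq n$; this ensures $f^{jn}(p) = p$, and by construction $f^{jn}(y)$ lies within $\epsilon_0$ of $f^{jn-k}(q) \in \mathrm{Orb}(q)$.

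The concluding estimate is a triangle inequality chase: $d(f^{jn}(x), f^{jn}(y))$ and $d(f^{jn}(x), p) = d(f^{jn}(x), f^{jn}(p))$ cannot both be small, because the sum of these two distances is bounded below by $d(f^{jn-k}(q), x) - d(f^{jn-k}(q), f^{jn}(y)) - d(p, x) \geq 8\epsilon_0 - \epsilon_0 - \delta$, which dominates $2\epsilon_0$ once $\delta$ is taken small enough (and sensitivity allows us to assume $\delta < \epsilon_0$ without loss). Hence at least one of $p$ or $y$ is a valid witness: it lies in $B(x, \delta)$ yet is mapped more than $\epsilon_0$ away from $f^{jn}(x)$ after $jn$ iterations.

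The main obstacle I anticipate is the bookkeeping in the final step---keeping the two candidate witnesses $p$ and $y$ straight, ensuring that $jn$ is simultaneously a multiple of $p$'s period and close enough to $k$ that $f^{jn}(y)$ remains near $\mathrm{Orb}(q)$, and selecting the constants ($8\epsilon_0$, the single $\epsilon_0$ radius in $W$, and the bound on $\delta$) so that the triangle inequality closes with room to spare. Continuity of $f$ is used only to make $W$ open; transitivity and dense periodic orbits carry the real content of the argument.
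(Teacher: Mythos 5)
The paper does not actually prove this theorem---it is quoted with a citation to Banks et al.\ (1992)---and your argument is essentially the standard proof from that reference: extract a uniform constant from two disjoint periodic orbits, trap a transitivity point near a multiple of the period of a nearby periodic point, and close with a triangle-inequality chase. The quantitative core is sound: with $d(x,\mathrm{Orb}(q))\geq 8\epsilon_0$, $d(f^{jn-k}(q),f^{jn}(y))<\epsilon_0$, and $d(x,p)<\delta<\epsilon_0$, the sum $d(f^{jn}(x),f^{jn}(p))+d(f^{jn}(x),f^{jn}(y))$ is at least $6\epsilon_0$, so one of the two points in $B(x,\delta)$ separates from $x$ by more than $\epsilon_0$ at time $jn$; and indeed $0\leq jn-k\leq n$ keeps $f^{jn}(y)$ inside the tube $W$ you built around $\mathrm{Orb}(q)$. (You prove the stronger, uniform-$\epsilon$ form of sensitivity, which implies the weaker per-point form in Definition 6.)

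The one genuine soft spot is the very first step: the existence of two periodic points with disjoint orbits. Neither of your parenthetical justifications is an argument---``contradicting the generality of the system'' and ``by contradiction against the assumption of sensitivity being nontrivial to prove'' do not establish anything. The fact you need is: if $X$ is \emph{infinite} and periodic points are dense, then there are infinitely many periodic points, hence infinitely many distinct (finite) periodic orbits, hence two disjoint ones; and two disjoint finite sets are a positive distance apart, which is what makes $\epsilon_0>0$. This requires the hypothesis that $X$ is infinite, which is present in the original theorem of Banks et al.\ but omitted from the statement as reproduced in this paper (for a single finite periodic orbit the conclusion is false). You should state that hypothesis explicitly and give the counting argument above in place of the hand-waving; it holds in every application made in this paper, since the spaces there are continua or infinite sequence spaces. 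With that repaired, the proof is complete.
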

The Banks-Brooks-Cairns-Davis-Stacey theorem, under the appropriate continuity restriction on $f$, allows us to replace the requirement of sensitivity on our systems with that of dense periodic orbits, as the latter (in conjunction with transitivity) implies the former in such systems.

Finally, we can formalize the idea that two discrete dynamical systems are "dynamically equivalent" through the standard notion of topological conjugacy:
\begin{definition}
\label{def:9}Two discrete dynamical systems $(X_1,d_1,f_1)$ and $(X_2,d_2,f_2)$ are topologically conjugate if and only if a homeomorphism $h: X_1\to X_2$ exists such that $h\circ f_1 = f_2 \circ h$.
\newline

Similarly, $(X_1,d_1,f_1)$ is topologically semi-conjugate to $(X_2,d_2,f_2)$ if and only if a continuous surjection $s: X_1\to X_2$ exists such that $s \circ f_1 = f_2\circ s$.
\end{definition}
The conjugacy described by $h$ is very strong; orbits in $(X_1,d_1,f_1)$ would have the same periodicity as their counterparts in $(X_2,d_2,f_2)$, and the underlying metric spaces $(X_1,d_1)$ and $(X_2,d_2)$ in these systems have equivalent topologies.

The following pragmatically important remark concludes this discussion:
\begin{remark}
\label{rem:2}
If a discrete dynamical system is topologically transitive, topologically mixing, or has dense periodic orbits, any discrete dynamical system which is topologically conjugate to it is also transitive/mixing/has dense periodic orbits.
\end{remark}
The transitivity/mixing portion of this remark follows from the fact that the homeomorphism implied between spaces in the notion of topological conjugacy preserves set operations; if the intersection of two open sets in one of the metric spaces is nonempty, the intersection of their images under the conjugacy-induced homeomorphism must also be nonempty. And since conjugacy preserves periodicity, the one-to-one correspondence between the topologies implied by the homeomorphism indicates that, if every open set in one metric space contains a periodic orbit, so must every open set in the conjugated topology.

Practically, this remark reveals a valuable tool for proving the existence of mixing or chaos in a discrete dynamical system; showing its existence in a simpler-to-understand qualitative dynamical system, and then proving topological conjugacy between the qualitative system and the original system. It also highlights the utility of the Banks-Brooks-Cairns-Davis-Stacey theorem; although sensitivity is not always preserved by topological conjugacy, the existence of dense periodic orbits is, providing a general mechanism to diagnose chaos via conjugacy when the restriction of continuity of $f$ is present in such systems.
\section{\label{sec:2}Describing Continuum Systems with Symbolic Dynamics}
Given the utility of symbolic dynamical models to describe discrete dynamical systems as described \& cited in the introduction of this text, one may ask themselves in which contexts it is possible to describe a discrete dynamical system on a continuum using a topologically conjugate symbolic dynamical systems. The answer is, in fact, never:
\begin{lemma}
\label{lem:1}Topological conjugacy between a symbolic dynamical system and a discrete dynamical system defined over a continuum is impossible.
\end{lemma}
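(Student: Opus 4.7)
The plan is to derive a contradiction from the purely topological obstruction between the two underlying metric spaces. Specifically, I would exploit the fact that topological conjugacy (Definition \ref{def:9}) requires the existence of a homeomorphism $h$ between the underlying metric spaces, and then show that no such homeomorphism can exist because one space is connected with more than one point while the other is totally disconnected.

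First I would unpack the definitions: suppose, for contradiction, that a symbolic dynamical system $(Q,\ell,\sigma)$ were topologically conjugate to a discrete dynamical system $(X,d,f)$ whose underlying metric space $(X,d)$ is a continuum. Then there is a homeomorphism $h : Q \to X$. By Remark \ref{rem:1}, $(Q,\ell)$ is totally disconnected, meaning its only connected subsets are singletons. By Definition \ref{def:2}, $(X,d)$ is connected and has uncountably many (in particular, more than one) elements.

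Next I would invoke the fact that connectedness is a topological invariant preserved under homeomorphisms, so $h^{-1}(X) = Q$ would have to be connected as a subset of itself. But $Q$ has uncountably many elements (since it is in bijection with $X$ via $h$), and a totally disconnected space containing more than one point cannot be connected: any two distinct points can be separated by disjoint clopen cylinder sets, as in the proof of Remark \ref{rem:1}. This yields the desired contradiction.

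I do not anticipate a serious obstacle; the argument is essentially a one-line topological incompatibility once Remark \ref{rem:1} and Definition \ref{def:2} are in hand. The only care required is to make explicit that topological conjugacy really does force a homeomorphism of the underlying spaces (so the dynamical data $\sigma$ and $f$ play no role here), and to note that the requirement of uncountably many points in Definition \ref{def:2} is what rules out the degenerate case of $Q$ being a single point (where a totally disconnected space is trivially connected).
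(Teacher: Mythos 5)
Your proposal is correct and follows essentially the same route as the paper's proof: both derive the impossibility from the fact that the underlying space of a symbolic dynamical system is totally disconnected (Remark \ref{rem:1}), a continuum is connected with more than one point, and the homeomorphism required by topological conjugacy preserves connectedness. Your added remark that the uncountability requirement in Definition \ref{def:2} rules out the degenerate single-point case is a sensible clarification but does not change the argument.
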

\begin{proof}This follows immediately from the fact that all symbolic dynamical systems have an underlying metric space which is totally disconnected, that all continua are connected, and that it is impossible for a totally disconnected metric space to be homeomorphic to a continuum as the homeomorphism required by topological conjugation preserves connectivity.
\end{proof}
Given the statement in the lemma above, the traditional approach in the literature to demonstrate the existence of mixing/chaos within discrete dynamical systems on continua has been to either perform a direct proof of mixing or chaos in such a system, or to pivot on proving that such discrete dynamical systems contain a subset which is topologically conjugate to a mixing/chaotic symbolic dynamical system. Unfortunately, these subsets must necessarily encompass a trivial region of the continuum, a statement formalized by the notion of a punctiform space\cite{steen2013counterexamples}:
\begin{definition}
A metric space $(X,d)$ is punctiform if and only if there do not exist any subsets $W\in X$ such that the metric subspace $(W,d)$ would be a continuum.
\end{definition}
Punctiform spaces, as implied by the name, are point-like; there is no part of such a space that is a continuum. And, perhaps unsurprisingly, the underlying metric spaces associated with qualitative dynamical systems are punctiform spaces:
\begin{remark}
\label{rem:3}All totally disconnected metric spaces are punctiform.
\end{remark}
\begin{corollary}
    If $(Q,\ell,\sigma)$ is a qualitative dynamical system, $(Q,\ell)$ is a punctiform metric space.
\end{corollary}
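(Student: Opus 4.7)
The plan is to prove Remark \ref{rem:3} by a short direct contradiction argument, and then chain it with Remark \ref{rem:1} to obtain the corollary. The only ingredient needed is the standard characterization of total disconnectedness: a metric space is totally disconnected if and only if every one of its connected subsets is either empty or a singleton. This is equivalent to the definition in terms of singleton connected components, and I would either adopt it as a working definition or prove it as a one-line lemma (in one direction, a non-empty connected subset lies inside the connected component of any of its points; in the other, components are themselves connected subsets).

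With that equivalence in hand, I would take an arbitrary totally disconnected metric space $(X,d)$ and suppose, toward a contradiction, that some $W \subseteq X$ yields a subspace $(W,d)$ that is a continuum. By Definition \ref{def:2}, $W$ must then be connected and uncountable, and in particular contain at least two distinct points. But this directly contradicts total disconnectedness of $(X,d)$, since the only connected subsets of $X$ are singletons or empty. Hence no such $W$ exists, which is exactly the statement that $(X,d)$ is punctiform.

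The corollary is then immediate: by Remark \ref{rem:1}, the underlying metric space $(Q,\ell)$ of any qualitative dynamical system is totally disconnected, so applying the remark just proved gives punctiformity of $(Q,\ell)$ at once. I do not expect any genuine obstacle here; the only point that requires a small amount of care is making the equivalence between total disconnectedness and the absence of connected subsets with more than one point explicit, but this is a routine unpacking of definitions rather than a real difficulty. Everything else is a one-step contradiction driven by the fact that continua are, by Definition \ref{def:2}, required to be both connected and uncountable.
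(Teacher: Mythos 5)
Your proposal is correct and follows essentially the same route as the paper: Remark \ref{rem:1} gives total disconnectedness of $(Q,\ell)$, and Remark \ref{rem:3} (justified, as you do, by the observation that connected subsets of a totally disconnected space have at most one point, so no subspace can be a connected uncountable continuum) then yields punctiformity. The only cosmetic difference is that you phrase the middle step as a contradiction while the paper states it directly.
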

\begin{corollary}
    If a subset $W \in X$ of a discrete dynamical system $(X,d,f)$ on a continuum exists such that $(W,d,f)$ is topologically conjugate to a qualitative dynamical system $(Q,\ell,\sigma)$, the subspace $(W,d)$ is punctiform.
\end{corollary}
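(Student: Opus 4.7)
The plan is to argue by contradiction, leveraging that every constituent property of a continuum (compactness, connectedness, uncountability) is preserved under homeomorphism. First I would invoke the preceding corollary to conclude that $(Q,\ell)$ is punctiform, and denote by $h: W \to Q$ the homeomorphism supplied by the topological conjugacy between $(W,d,f)$ and $(Q,\ell,\sigma)$.

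Then I would assume, for contradiction, that $(W,d)$ is not punctiform, so that some $V \subseteq W$ renders the metric subspace $(V,d)$ a continuum. The restriction $h|_V: V \to h(V)$ is a homeomorphism between the respective subspaces (each with its inherited topology), so $(h(V),\ell)$ must inherit compactness, connectedness, and uncountability from $(V,d)$. Hence $(h(V),\ell)$ is a continuum sitting inside $Q$, contradicting the punctiformity of $(Q,\ell)$ established at the start. It follows that no such $V$ can exist, i.e., $(W,d)$ is punctiform.

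The main obstacle is really just bookkeeping: one must explicitly observe that ``continuum'' is a purely topological property (since each of the three defining attributes, as stated in Definition \ref{def:2}, is a topological invariant) and that a homeomorphism between two ambient spaces always restricts to homeomorphisms between corresponding subspaces under the inherited topologies. Once these two observations are in hand, the argument is immediate, and no deep structural result beyond the preceding corollary is required.
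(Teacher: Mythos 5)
Your argument is correct, but it routes through a different invariant than the paper does. The paper's proof transports \emph{total disconnectedness} across the conjugating homeomorphism: $(Q,\ell)$ is totally disconnected by Remark \ref{rem:1}, total disconnectedness is a homeomorphism invariant (Remark \ref{rem:7}), so $(W,d)$ is totally disconnected, and then Remark \ref{rem:3} is applied on the $W$ side to conclude punctiformity. You instead transport \emph{punctiformity} itself: you take the first corollary as input (so Remarks \ref{rem:1} and \ref{rem:3} are applied on the $Q$ side) and then show directly, by restricting the homeomorphism to a hypothetical continuum $V \subseteq W$, that a non-punctiform $W$ would force a continuum inside $Q$. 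Your route requires the two bookkeeping observations you name — that ``continuum'' is a topological property and that a homeomorphism restricts to a homeomorphism of subspaces — neither of which the paper needs, since it leans on the already-stated invariance of total disconnectedness. In exchange, your argument is marginally more general: it would still go through if $(Q,\ell)$ were punctiform without being totally disconnected, whereas the paper's chain genuinely depends on total disconnectedness as the intermediate property. Both proofs are valid; yours is self-contained modulo the preceding corollary, the paper's is shorter given the machinery already assembled.
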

Remark \ref{rem:3} is true thanks to the fact that the connected components of a totally disconnected space are singleton sets, and so there is no subset of a totally disconnected set which is both connected in the subspace topology and has more than one element, as required for it to be a continuum. Its corollaries follow from Remark \ref{rem:1} and the fact that total disconnectedness is preserved by the homemorphism implied by topological conjugacy, respectively.

This second corollary is the mathematically rigorous way of stating that, for any subset in a discrete dynamical system on a continuum, the existence of a topological conjugacy to a mixing/chaotic qualitative dynamical system only works to identify the presence of these properties in point-like regions within that continuum. As a result, applied dynamicists identifying such a conjugacy in a real system could face a scenario where the behavior in all nontrivially large regions of the continuum does not exhibit mixing or chaos, in spite of the conjugacy described above. In addition, those who instead attempt to be satisfied with establishing a semi-conjugacy with a qualitative dynamical system will find that there are trajectories in the qualitative dynamical system that describe trajectories that do not exist in the continuum system, as a result of the surjectiveness (and therefore lack of one-to-one correspondence) of the semi-conjugating map.

Therefore, it would be highly desirable to discover a workaround that would somehow allow an applied mathematician or scientist to fully describe the orbits of a discrete dynamical system over a continuum using a one-to-one correspondence to a qualitative dynamical system, and then use the existence (or absence) of mixing/chaotic behavior in the qualitative system to prove the existence of that behavior in the discrete system everywhere on the continuum. 

A curious approach to circumvent the limitation of Lemma \ref{lem:1} on the analysis of a discrete dynamical system $(X,d,f)$ where $(X,d)$ is a continuum, is to consider constructing an alternative metric $d^*$ for $X$ such that the metric topology of $(X,d^*)$ contains the metric topology of $(X,d)$, and then studying the dynamical properties of the auxiliary system $(X,d^*,f)$. One could then hope that this auxiliary system is topologically conjugate to a qualitative/symbolic dynamical system, and that the existence of mixing or chaos in the auxiliary qualitative system indicates its existence in the actual system through topological conjugacy and Remark \ref{rem:2}. 

The process of verifying the circumstances in which the above proposed analysis can work, or can't, must begin with an understanding of the metric space defined by $(X,d^*)$; a metric space termed in this text an "ultracontinuum" to reflect that its topology is finer than the topology of a continuum on the same underlying set.

\subsection{\label{Ultracontinuum}Ultracontinua}
\begin{definition}
 \label{def:11}   A metric space $(X,d^*)$ is an ultracontinuum if and only if there exists a metric $d$ such that $(X,d)$ is a continuum and that the metric topology of $(X,d^*)$ is strictly finer than the metric topology of $(X,d)$. 
\newline

    $(X,d^*)$ can then be referred to as an ultracontinuum of $(X,d)$.
\end{definition}
The main utility of the notion of an ultracontinuum in this text is, because the topology of the ultracontinuum contains every open set in the topology of its subsidiary continuum, that mixing behavior everywhere on the ultracontinuum implies it everywhere on the contained continuum.

\begin{lemma}
\label{lem:2}If $(X,d^*,f)$ is a topologically transitive/mixing discrete dynamical system such that $(X,d^*)$ is an ultracontinuum of $(X,d)$, then $(X,d,f)$ is correspondingly transitive/mixing.
\end{lemma}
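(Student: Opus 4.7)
The plan is to exploit the topology inclusion in Definition \ref{def:11} directly, since transitivity and mixing (Definitions \ref{def:4} and \ref{def:5}) are expressed entirely as statements about open sets and set-theoretic images under iterates of $f$. Let $\tau$ and $\tau^*$ denote the metric topologies of $(X,d)$ and $(X,d^*)$ respectively; by the definition of an ultracontinuum, $\tau \subseteq \tau^*$. The strictness of this inclusion is not needed for the lemma itself.

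First, I would fix an arbitrary pair of sets $U, V$ that are open in $(X,d)$. Because $\tau \subseteq \tau^*$, both $U$ and $V$ are automatically open in $(X,d^*)$. Next, I would observe that $f: X \to X$ is the same set-function in both discrete dynamical systems, so the iterate $f^n$ and the image $f^n(U)$ and the intersection $f^n(U) \cap V$ are well-defined subsets of $X$ with identical set-theoretic meaning regardless of which metric is placed on $X$. Then I would invoke the hypothesis: in the transitive case, topological transitivity of $(X,d^*,f)$ applied to the pair $(U,V)$ (now viewed as $\tau^*$-open) yields an integer $n \geq 0$ with $f^n(U) \cap V \neq \emptyset$; in the mixing case, it yields an $n \geq 0$ such that $f^N(U) \cap V \neq \emptyset$ for every $N \geq n$. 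In either case, this is exactly the conclusion required by Definition \ref{def:4} or Definition \ref{def:5} for $(X,d,f)$ on the chosen pair. Since $U, V$ were arbitrary $\tau$-open sets, $(X,d,f)$ inherits the corresponding property.

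Honestly, there is no substantive obstacle in the argument; the whole content of the lemma is the observation that transitivity and mixing are defined purely in terms of open sets and purely set-theoretic iterates of $f$, and that passing to a coarser topology can only shrink the collection of pairs $(U,V)$ one must check. The only point that warrants care is bookkeeping: one must be explicit that no continuity hypothesis on $f$ is being used (neither definition in the excerpt demands it), and that $f^n$ as a map of sets does not depend on the metric structure. Once this is pointed out, the proof is essentially one line per case.
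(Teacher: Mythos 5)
Your proof is correct and follows exactly the paper's own argument: since $\tau \subseteq \tau^*$, every pair of $\tau$-open sets is a pair of $\tau^*$-open sets, and the transitivity/mixing conditions (being purely set-theoretic statements about iterates of $f$) transfer immediately. Your additional remarks that $f^n$ is metric-independent and that no continuity is needed are sound bookkeeping but do not change the route.
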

\begin{proof}
    Either transitivity and mixing on the ultracontinuum system implies that their corresponding conditions must be satisfied for all pairs of open sets in the metric topology of $(X,d^*)$, $\tau^*$. Since the metric topology of $(X,d)$, $\tau$, is contained inside of $\tau^*$ by definition, these conditions must also be satisfied for any pair of open sets in $\tau$, indicating that $(X,d,f)$ is transitive/mixing if $(X,d^*,f)$ is transitive/mixing as well.
\end{proof}
An analogous property is also true for dense periodic orbits:
\begin{lemma}
\label{lem:3}If $(X,d^*,f)$ has dense periodic orbits and $(X,d^*)$ is an ultracontinuum of $(X,d)$, then $(X,d,f)$ has dense periodic orbits.
\end{lemma}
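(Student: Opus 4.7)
The plan is to mirror the proof of Lemma \ref{lem:2} almost verbatim, exploiting the fact that the defining condition for periodic orbits (the equation $f^n(x)=x$) is purely set-theoretic and makes no reference to the metric at all; only the ``density'' half of the statement is topological, and that is precisely where the inclusion of topologies will do the work.

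Concretely, I would let $\tau$ and $\tau^*$ denote the metric topologies of $(X,d)$ and $(X,d^*)$ respectively, and recall from Definition \ref{def:11} that $\tau \subseteq \tau^*$. To show $(X,d,f)$ has dense periodic orbits, pick an arbitrary open set $U \in \tau$. Since $\tau \subseteq \tau^*$, the same set $U$ is also a member of $\tau^*$, hence an open set in the ultracontinuum topology. By hypothesis, $(X,d^*,f)$ has dense periodic orbits, so there exists some $x \in U$ and a positive integer $n$ with $f^n(x) = x$. Because this equation is a statement about the function $f$ acting on the underlying set $X$, with no reference to either $d$ or $d^*$, the same element $x$ witnesses periodicity in $(X,d,f)$. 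Thus every $U \in \tau$ contains a periodic point of $f$, and $(X,d,f)$ has dense periodic orbits.

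There is no real obstacle here; the only thing to be careful about is making explicit that periodicity is a metric-independent property, so that the ``periodic orbits'' identified in the ultracontinuum system really are periodic orbits in the subsidiary continuum system as well. The logical structure is identical to Lemma \ref{lem:2}: a property defined as a universal statement over open sets in the finer topology automatically descends to the same universal statement over open sets in any coarser topology, provided the witnesses of the property (here, periodic points; there, iterate-intersection integers) are independent of the choice of topology.
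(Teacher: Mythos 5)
Your proof is correct and follows essentially the same route as the paper's: since $\tau \subseteq \tau^*$, every open set of $(X,d)$ is an open set of $(X,d^*)$ and hence already contains a periodic point by hypothesis. Your additional remark that periodicity ($f^n(x)=x$) is metric-independent is a worthwhile explicit clarification, but the underlying argument is the same.
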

\begin{proof}
    Since every open set in $\tau^*$ must contains a periodic orbit, every open set in $\tau \subset \tau^*$ correspondingly contains a periodic orbit, and so $(X,d,f)$ also has dense periodic orbits.
\end{proof}
Under an additional requirement on $f$, we can state the same sort of result for chaoticity:
\begin{lemma}
\label{lem:4}
    If $(X,d^*,f)$ is chaotic, $(X,d^*)$ is an ultracontinuum of $(X,d)$, and $f$ is continuous on $(X,d)$, then $(X,d,f)$ is chaotic.
\end{lemma}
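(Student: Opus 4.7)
The plan is to reduce the lemma to Lemmas \ref{lem:2} and \ref{lem:3} together with the Banks-Brooks-Cairns-Davis-Stacey Theorem (Theorem \ref{thm:1}). Since $(X,d^*,f)$ is chaotic by hypothesis, it is in particular topologically transitive and has dense periodic orbits. Lemma \ref{lem:2} then immediately gives that $(X,d,f)$ is topologically transitive, and Lemma \ref{lem:3} gives that $(X,d,f)$ has dense periodic orbits. So two of the three ingredients in Definition \ref{def:8} transfer from the ultracontinuum system to the subsidiary continuum system for free.

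The remaining ingredient, sensitivity, is the subtle one. Sensitivity does not behave nicely under refinement of the topology: a pair of points that $f^n$ separates by more than $\epsilon$ in the $d^*$-metric need not be separated by any fixed positive amount in the $d$-metric, so one cannot simply import sensitivity from $(X,d^*,f)$ the way transitivity and dense periodicity are imported. This is precisely where the additional hypothesis that $f$ is continuous on $(X,d)$ earns its keep: with transitivity and dense periodic orbits already established for $(X,d,f)$, Theorem \ref{thm:1} applies directly to $(X,d,f)$ and yields sensitivity of the continuum system on its own terms, without any appeal to sensitivity in the ultracontinuum.

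Putting the three conclusions together, $(X,d,f)$ is topologically transitive, has dense periodic orbits, and is sensitive, so by Definition \ref{def:8} it is chaotic, which is what we wanted to show. The main conceptual obstacle is recognizing that sensitivity cannot be pushed down from the finer topology in the style of Lemmas \ref{lem:2} and \ref{lem:3}, and that the continuity hypothesis on $f$ in $(X,d)$ is there precisely so that Theorem \ref{thm:1} can supply sensitivity intrinsically; no other step in the argument requires genuinely new work.
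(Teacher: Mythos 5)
Your proposal is correct and follows essentially the same route as the paper's own proof: transitivity and dense periodic orbits descend via Lemmas \ref{lem:2} and \ref{lem:3}, and sensitivity is then supplied by the Banks-Brooks-Cairns-Davis-Stacey theorem applied to $(X,d,f)$ using the continuity hypothesis. Your added observation that sensitivity cannot be transferred directly from the finer metric is a correct and worthwhile clarification of why the continuity assumption is needed.
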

\begin{proof}
If $(X,d^*,f)$ is chaotic, then it is topologically transitive and contains dense periodic orbits; Lemmas \ref{lem:2} and \ref{lem:3} indicate that this must also be true for $(X,d,f)$. Because $f$ is also continuous on $(X,d)$, the Banks-Brooks-Cairns-Davis-Stacey theorem\cite{Banks1992} ensures that $(X,d,f)$ is also sensitive, and therefore chaotic.
\end{proof}
These three lemmas represent the key utility of the concept of ultracontinua to analyzing dynamical systems. 

Another convenient property of an ultracontinuum space is that an ultracontinuum transformed by a homeomorphism is still an ultracontinuum:
\begin{lemma}
\label{lem:5}
If two metric spaces $(X_1,d^*_1)$ and $(X_2,d^*_2)$ are homeomorphic and $(X_1,d_1^*)$ is an ultracontinuum, $(X_2,d^*_2)$ is an ultracontinuum.
\end{lemma}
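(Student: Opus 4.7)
The plan is to build the witnessing ``coarser'' metric on $X_2$ by transporting the witnessing metric $d_1$ on $X_1$ across an appropriate map, and then use the given homeomorphism between the starred spaces to compare topologies.

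First I would fix the data: let $d_1$ be a metric on $X_1$ such that $(X_1,d_1)$ is a continuum and the topology $\tau_1$ of $(X_1,d_1)$ is strictly contained in the topology $\tau_1^*$ of $(X_1,d_1^*)$, and let $h\colon(X_1,d_1^*)\to(X_2,d_2^*)$ be the assumed homeomorphism. Define a metric $d_2$ on $X_2$ by the pullback formula
\[
d_2(x,y) \;=\; d_1\bigl(h^{-1}(x),\,h^{-1}(y)\bigr).
\]
This is immediately a metric (it inherits the axioms from $d_1$ via the bijection $h^{-1}$), and by construction $h\colon(X_1,d_1)\to(X_2,d_2)$ is an isometry. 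Since $(X_1,d_1)$ is a continuum and compactness, connectedness, and uncountability are preserved by isometries (in fact by homeomorphisms), $(X_2,d_2)$ is a continuum as well.

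Next I would compare the two topologies on $X_2$. Write $\tau_2$ for the metric topology of $(X_2,d_2)$ and $\tau_2^*$ for that of $(X_2,d_2^*)$. To show $\tau_2\subseteq\tau_2^*$, take any $U\in\tau_2$; then $h^{-1}(U)\in\tau_1$ because $h$ is a homeomorphism between $(X_1,d_1)$ and $(X_2,d_2)$, so $h^{-1}(U)\in\tau_1^*$ by hypothesis, and pushing forward through the homeomorphism $h\colon(X_1,d_1^*)\to(X_2,d_2^*)$ gives $U=h(h^{-1}(U))\in\tau_2^*$. For strict containment, pick some $V\in\tau_1^*\setminus\tau_1$ witnessing that $(X_1,d_1^*)$ is an ultracontinuum of $(X_1,d_1)$; then $h(V)\in\tau_2^*$, but if $h(V)$ lay in $\tau_2$ then applying the homeomorphism $h^{-1}\colon(X_2,d_2)\to(X_1,d_1)$ would give $V\in\tau_1$, a contradiction. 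Hence $\tau_2\subsetneq\tau_2^*$, and $(X_2,d_2^*)$ is an ultracontinuum of $(X_2,d_2)$.

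There is no deep obstacle here; the only point that requires a little care is keeping straight which of the two homeomorphisms is being invoked at each step, since the argument uses both the given homeomorphism $h\colon(X_1,d_1^*)\to(X_2,d_2^*)$ and the constructed isometry $h\colon(X_1,d_1)\to(X_2,d_2)$, exchanging their roles to push topologies in opposite directions and to deduce strict finer-ness from strict finer-ness of the original pair.
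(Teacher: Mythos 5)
Your proof is correct and follows essentially the same strategy as the paper: transport the subsidiary continuum structure on $X_1$ across the homeomorphism to obtain one on $X_2$, then check that the transported topology sits strictly inside $\tau_2^*$. The only difference is cosmetic but arguably an improvement: you realize the transported structure concretely as the pullback metric $d_2(x,y)=d_1\bigl(h^{-1}(x),h^{-1}(y)\bigr)$, whereas the paper pushes forward the topology $\tau_1$ and appeals abstractly to metrizability of the image to recover a metric.
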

\begin{proof}
    Assume $(X_1,d^*_1)$ is an ultracontinuum with metric topology $\tau_1^*$. By Definition \ref{def:11}, there must exist some metric $d_1$ and a metric topology $\tau_1$ it induces on $X_1$ such that $(X_1,d_1)$ is a continuum and $\tau_1 \subset \tau_1^*$. Therefore, the topological space $(X_1,\tau_1)$ is a compact, connected, metrizable topological space with uncountably many elements.

    Noting there must exist a homeomorphism $h$ between $(X_1,d^*_1)$ and $(X_2,d^*_2)$, and a related auxiliary bijection $h^\tau$ between $\tau_1^*$ and $\tau_2^*$, consider the restriction of $h^\tau$ to $\tau_1$ and its image $\tau_2 = h(\tau_1)$. This restriction is a bijection between $\tau_1$ and $\tau_2$, and so $(X_2,\tau_2)$ must be a compact, connected, metrizable topological space with uncountably many elements. This metrizability implies there exists a metric $d_2$ such that the metric space $(X_2,d_2)$ has the metric topology $\tau_2$.

    Since this metric space $(X_2,d_2)$ is compact, connected, and has uncountably many elements, and its metric topology $\tau_2 \subset \tau_2^*$ by construction, $(X_2,d_2)$ is a continuum and $(X_2,d_2^*)$ is an ultracontinuum of $(X_2,d_2)$.
\end{proof}

It is important to note that the topology of the ultracontinua does not necessarily have to be similar to that of the continuum whose topology it contains; in fact, we will positively (and perhaps counterintuitively) demonstrate that an ultracontinuum can be totally disconnected while having a necessarily connected subsidiary continuum. This is possible because the continuum is not actually a metric or topological subspace of the corresponding ultracontinuum; in fact, there need not be any obvious relationship between the metrics of the continuum and the ultracontinuum, enabling the ultracontinuum to have relatively exotic topologies.

However, there is one large restriction on the kind of topology an ultracontinuum can have:
\begin{lemma}
\label{lem:6}
There is no compact ultracontinuum.
\end{lemma}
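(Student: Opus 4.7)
The plan is to argue by contradiction using the classical fact that a continuous bijection from a compact space to a Hausdorff space is automatically a homeomorphism. I would begin by assuming that $(X,d^*)$ is a compact ultracontinuum of some continuum $(X,d)$, and by considering the identity map $\mathrm{id}: X \to X$ viewed as a function from $(X,d^*)$ to $(X,d)$.

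Next I would verify that this identity map is continuous. Because $(X,d^*)$ is an ultracontinuum of $(X,d)$, Definition \ref{def:11} gives that the metric topology $\tau^*$ of $(X,d^*)$ strictly contains the metric topology $\tau$ of $(X,d)$. Consequently, the preimage under the identity of any open set in $\tau$ is itself an element of $\tau \subset \tau^*$, so the identity map is continuous as a map $(X,d^*) \to (X,d)$. It is also manifestly a bijection.

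Then I would invoke the standard topological result: a continuous bijection from a compact space to a Hausdorff space is a homeomorphism (closed sets in the compact domain are compact, compact subsets of a Hausdorff space are closed, so the map is also closed and hence a homeomorphism). Under the assumption that $(X,d^*)$ is compact, and since $(X,d)$ is a metric space and therefore Hausdorff, this would force the identity map to be a homeomorphism between $(X,d^*)$ and $(X,d)$, which in turn would require $\tau^* = \tau$. This contradicts the strict inclusion $\tau \subsetneq \tau^*$ demanded by Definition \ref{def:11}, and the lemma follows.

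I do not anticipate any substantive obstacle here; the argument is a two-line application of the compact-to-Hausdorff homeomorphism lemma, and the only subtlety is keeping straight the direction of the identity map so that continuity (rather than openness) is the property obtained from the inclusion $\tau \subseteq \tau^*$. I would take care to state that Hausdorffness of $(X,d)$ comes for free from its being a metric space, so that no additional hypothesis on the subsidiary continuum is needed.
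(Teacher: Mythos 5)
Your proof is correct and follows essentially the same route as the paper's: both consider the identity map from $(X,d^*)$ to $(X,d)$ as a continuous bijection from a compact space and invoke the compact-to-Hausdorff homeomorphism lemma to force $\tau^* = \tau$, contradicting the strict inclusion in Definition \ref{def:11}. No gaps.
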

\begin{proof}
    Consider an ultracontinuum $(X,d^*)$. By definition, there must exist some metric $d$ such that $(X,d)$ is a continuum, and such that the resulting metric topology of $(X,d)$, $\tau$, is strictly coarser than the metric topology $\tau^*$ of $(X,d^*)$; in short, $\tau \subset \tau^*$. 
    
    Now consider the identity mapping $i: (X,\tau^*) \to (X,\tau)$; this mapping is a continuous bijection, as every element of $\tau$ is also an element of $\tau^*$. 

    Suppose that $(X,d^*)$ were a compact metric space. If so, then $i$ would be a continuous bijection from a compact space to a metric space, which is automatically a homeomorphism\cite{Rudin1976-vd}. But this implies that $i$ is also a bijection between the topologies, and therefore $\tau^* = \tau$; violating the requirement that $\tau \subset \tau^*$. As a result, there is no ultracontinuum $(X,d^*)$ whose topology is compact.
\end{proof}
\begin{corollary}
There is no symbolic dynamical system which is topologically conjugate to a discrete dynamical system on an ultracontinuum.
\end{corollary}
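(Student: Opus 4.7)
The plan is to prove this corollary by contradiction, leveraging Lemma \ref{lem:6} together with the compactness requirement built into the definition of a symbolic dynamical system. First I would suppose, toward contradiction, that there exists a symbolic dynamical system $(Q,\ell,\sigma)$ topologically conjugate to a discrete dynamical system $(X,d^*,f)$ where $(X,d^*)$ is an ultracontinuum. By Definition \ref{def:1}, the qualifier of being a symbolic dynamical system (as opposed to a merely qualitative one) forces $(Q,\ell)$ to be compact.

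Next I would invoke Definition \ref{def:9}: the existence of a topological conjugacy guarantees a homeomorphism $h: Q \to X$ between the underlying metric spaces $(Q,\ell)$ and $(X,d^*)$. Since compactness is a topological invariant preserved under homeomorphism, and since $(Q,\ell)$ is compact, it follows that $(X,d^*)$ is also compact.

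Finally, I would note that this directly contradicts Lemma \ref{lem:6}, which asserts that no ultracontinuum can be compact. The assumed conjugacy is therefore impossible, establishing the corollary.

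There is no serious obstacle here; the main step is simply recognizing that the compactness clause in the definition of a symbolic dynamical system, combined with the topological rigidity of conjugacy, forces a collision with Lemma \ref{lem:6}. The only minor subtlety to call out in writing is distinguishing the symbolic case (compact, and hence forbidden) from the broader qualitative case (not necessarily compact, and therefore not ruled out by this argument) — which is consistent with the paper's overarching strategy of using qualitative, non-symbolic conjugacies on ultracontinua to diagnose mixing and chaos on the subsidiary continuum.
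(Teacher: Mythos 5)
Your proof is correct and follows exactly the argument the paper intends for this corollary: a symbolic dynamical system has a compact underlying metric space by Definition \ref{def:1}, topological conjugacy supplies a homeomorphism that preserves compactness, and this collides with Lemma \ref{lem:6}. Nothing further is needed.
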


This corollary appears to put a dent in the entire motivation for defining ultracontinuum spaces—the idea of establishing topological conjugacy to a symbolic dynamical system—but notice that this corollary implies nothing about the non-existence of an ultracontinuum which is topologically conjugate to a qualitative dynamical system. In the following section, we will demonstrate by example that it is possible to use this framework to establish a topological conjugacy between a qualitative dynamical system that is not a symbolic dynamical system and a discrete dynamical system on an ultracontinuum, and show that transitivity on the qualitative system implies transitivity for the ultracontinuum system on a subsidiary continuum.

\subsection{\label{sec:mixing}Example: Mixing in the Dyadic Map}

Consider the discrete dynamical system $(X,d,f)$ where $X = \left[0,1\right]$ is the closed unit interval, $d$ is the Euclidean metric on the interval such that 
\begin{eqnarray}
d(x,y) = ||x - y||^2 \quad \forall x, y \in X
\end{eqnarray}
and $f: [0,1] \to [0,1]$ is the dyadic map, defined such that
\begin{eqnarray}
f(x) = 
    \begin{cases}
        2x & \text{if } 0 \leq x < \frac{1}{2}\\
        2x - 1 & \text{if } \frac{1}{2} \leq x \leq 1\\
    \end{cases}
\end{eqnarray}
For brevity, we can refer to the metric space $(X,d)$, often called the "standard unit interval", with the symbol $\mathbf{I}$ where convenient. A graph of the dyadic map, as defined above, is shown in Fig.~\ref{fig:dyadicmap}---notice that the mapping is discontinuous on $\mathbf{I}$, and therefore lies outside of the realm of applicability of Lemma \ref{lem:4}. The value of the map at the discontinuity selected in Eq. 2 is not required for the general validity of the following analytical technique; it can be straightforwardly altered to accommodate selecting $f(\frac{1}{2}) = 1$ instead of $0$.
\begin{figure}
\includegraphics[width=0.47\textwidth,keepaspectratio]{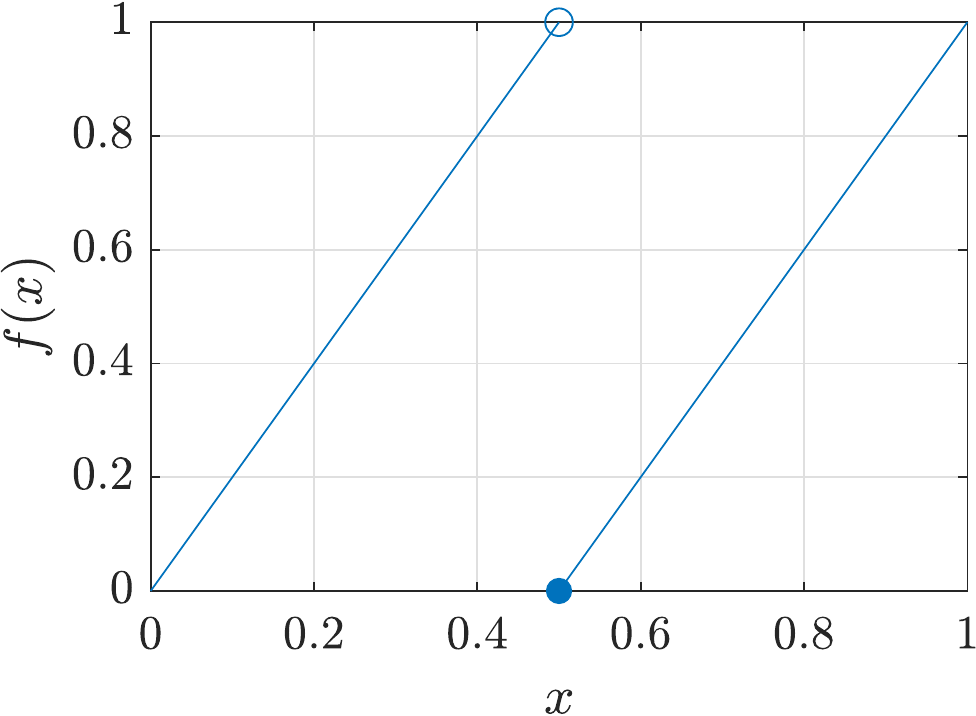}% Here is how to import EPS art
\caption{\label{fig:dyadicmap} A graph of the dyadic map $f$, as defined in Equation 2. The dot indicates the value the function takes in the discontinuity at $x = \frac{1}{2}$.}
\end{figure}

In addition to being considered on its own as a prototypical example of a chaotic map in texts on nonlinear dynamics and chaos\cite{alligood2012chaos}, the dyadic map has been shown to be topologically semi-conjugate with the tent map of unit height and the logistic map with the parameter value $4$ when both act on the unit interval\cite{alligood2012chaos,robinson1998dynamical}; both of which are also key pedagogical examples in nonlinear dynamics and chaos\cite{robinson1998dynamical,strogatz2014nonlinear,alligood2012chaos}.

One can describe the dynamics of an element in $[0,1]$ under the dyadic map straightforwardly by considering a binary expansion of that element. By picking the expansion for every element in $[0,1)$ that contains no infinite trailing $1$'s in the numerator (and such that $1$'s binary expansion is picked to be the one containing exclusively $1$'s in the numerator), the dynamics of the dyadic map can be described by simply shifting the numerators of every term in the expansion "forwards" and "deleting" what was previously the initial numerator in the expansion.

To illustrate, consider $\frac{3}{4}$. Its binary expansion per the above scheme is:
\begin{eqnarray}
    \frac{3}{4} = \frac{1}{2} + \frac{1}{2^2} + \frac{0}{2^3} + \frac{0}{2^4} + \frac{0}{2^5} + \dots
\end{eqnarray}
Evolving this element with the dyadic map results in the following element:
\begin{eqnarray}
    f\left(\frac{3}{4}\right) =  \frac{1}{2} = \frac{1}{2} + \frac{0}{2^2} + \frac{0}{2^3} + \frac{0}{2^4} + \frac{0}{2^5} + \dots
\end{eqnarray}
Finally, evolving the resulting $\frac{1}{2}$ generates the expected binary expansion:
\begin{eqnarray}
    f^2\left(\frac{3}{4}\right) =  0 = \frac{0}{2} + \frac{0}{2^2} + \frac{0}{2^3} + \frac{0}{2^4} + \frac{0}{2^5} + \dots
\end{eqnarray}
All future iterations will result in the same element $0$, and by extension the same binary expansion (as predicted by the claim above).

This description of the elements of $[0,1]$, and how the effect of the dyadic map can be described with them, lends itself to a natural qualitative description of the dynamics of the dyadic map on the unit interval. The traditional approach is to partition the interval into "left" ($0\leq x<\frac{1}{2}$) and "right" portions ($\frac{1}{2}\leq x\leq1$), and to describe the dynamics of an element $x \in [0,1]$ by assigning it an infinite sequence of symbols from the alphabet $\Sigma = \{L,R\}$, where the $n$th symbol in the sequence indicates if the element is on the left ($L$) or right ($R$) side of the partition after $n-1$ applications of the dyadic map. Conveniently, the sequence of $L$'s and $R$'s assigned to an element of $[0,1]$ through this process is precisely the sequence of numerators in the binary expansion of that number after replacing $L$'s with $0$'s and $R$'s with $1$'s. 

As an example, the element $\frac{1}{2}$ would be assigned the infinite sequence $RLLLLL\dots$, as it is originally located on the right side of the partition as defined above, but then evolves into $0$ on the left side and remains there for all future iterations. Replacing the $R$'s and $L$'s with $1$'s and $0$'s, this is precisely the sequence of numbers in the numerators of the binary expansion of $\frac{1}{2}$ shown in Equation 4.

The set of all such possible infinite sequences, $Q$, equipped with the shift map $\sigma$ and an appropriate metric $\ell$ that induces the required metric topology for $(Q,\ell,\sigma)$ to be a qualitative dynamical system according to Definition \ref{def:1}, is the qualitative dynamical system of interest with which we will try to demonstrate topological conjugacy to the dyadic map on the interval. Although the precise choice of metric $\ell$ is not important as long as it generates the correct topology, we can consider a specific choice for clarity that is used commonly in symbolic dynamics texts\cite{lind_marcus_1995, Kitchens1998, Block2006-re}:
\begin{eqnarray}
    \ell(x,y) =  \begin{cases}
        2^{-k} & \text{if } x \neq y\\
        0 & \text{if } x = y
    \end{cases}
\end{eqnarray}
where $k$ is the length of the largest prefix of symbols that $x$ and $y$ share.
It therefore makes sense to identify certain interesting properties of this set $Q$, as well as the metric space $(Q,\ell)$ it generates.

The first thing to note is that not all infinite sequences made from $L$'s and $R$'s describe the orbit of an element in the unit interval under the dyadic map. For example, there is no element in $[0,1]$ associated with the infinite sequence $LRRRRRR\dots$, as there is no element in $[0,1]$ smaller than $\frac{1}{2}$ that maps to $1$ through the dyadic map. In fact, there is no element other than $1$ that can be described by an infinite sequence of $L$'s and $R$'s that terminates with an infinite sequence of $R$'s; the present or future value of any such element must contain a "suffix" of the form $LRRRRR\dots$, and therefore at some point must be less than $\frac{1}{2}$ but map to $1$, which was previously described as impossible. 

$Q$ can therefore be fully described as the union of the set of all infinite symbol sequences made from the alphabet $\Sigma = \{L,R\}$ that do not contain a terminating infinite string of $R$'s, and the single infinite sequence consisting entirely of all $R$'s. These "missing" trajectories in $Q$ generate a perhaps surprising topological consequence for the metric space $(Q,\ell)$:
\begin{lemma}
\label{lem:7}    $(Q,\ell)$ is not a compact metric space.
\end{lemma}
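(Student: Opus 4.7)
The plan is to prove that $(Q,\ell)$ is not compact by constructing a Cauchy sequence in $Q$ that fails to converge in $Q$; since every compact metric space is complete, this suffices.

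The key observation is a subtle asymmetry in how $Q$ was constructed. As described in the paragraph preceding the lemma, $Q$ consists of every infinite $\{L,R\}$-sequence that does not terminate with an infinite tail of $R$'s, together with the single exceptional sequence $RRRR\dots$. In particular, every sequence of the form \emph{(nontrivial prefix)}$R^{\infty}$ is excluded from $Q$. The strategy is to build a Cauchy sequence of elements of $Q$ whose natural limit is one of these omitted sequences.

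For each positive integer $n$, define
\[
c_n \;=\; L\,\underbrace{R R \dots R}_{n\text{ copies}}\,LLLL\dots \,.
\]
Each $c_n$ lies in $Q$ since it terminates in an infinite tail of $L$'s. Using the explicit formula for $\ell$, for $n<m$ the sequences $c_n$ and $c_m$ share the prefix $L R^{n}$ of length $n+1$ and disagree at position $n+2$; hence $\ell(c_n,c_m) = 2^{-(n+1)}$, which vanishes as $\min(n,m) \to \infty$. Thus $(c_n)$ is Cauchy in $(Q,\ell)$.

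Finally, I would argue that $(c_n)$ has no limit in $Q$. If some $q \in Q$ satisfied $c_n \to q$, then for every integer $k \geq 1$ and all sufficiently large $n$, the condition $\ell(c_n,q) < 2^{-k}$ would force $q$ to agree with $c_n$ on its first $k$ symbols. Since the first $k$ symbols of $c_n$ are $L R^{k-1}$ for all $n \geq k$, taking $k\to\infty$ forces $q = L R R R \dots$. But this is precisely one of the excluded sequences (a nontrivial prefix followed by an infinite tail of $R$'s, and not the all-$R$ sequence), so $q \notin Q$, a contradiction. Hence $(Q,\ell)$ is not complete, and therefore not compact. The main obstacle is conceptual rather than computational: one must correctly pinpoint exactly which sequences $Q$ omits and recognize that this omission leaves a precise ``gap'' in the space that can be approached by sequences inside $Q$. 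Once the structure of $Q$ is clear, the construction and verification above are essentially routine.
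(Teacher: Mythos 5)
Your proof is correct, but it takes a genuinely different route from the paper's. The paper works directly from the open-cover definition of compactness: it exhibits the infinite cover $R\_$, $LL\_$, $LRL\_$, $LRRL\_$, $LRRRL\_,\dots$ of $Q$ by cylinder sets and observes that no member can be discarded (each $LR^kL\_$ is the only set containing $LR^kLLLL\dots$), so no finite subcover exists. You instead invoke the standard fact that a compact metric space is complete, and exhibit the Cauchy sequence $c_n = LR^nLLLL\dots$ whose only possible limit, $LRRR\dots$, is one of the sequences excluded from $Q$. The two arguments are really two views of the same phenomenon: the paper's cover is precisely a family of cylinder sets accumulating at the missing point $LRRR\dots$, the very point your Cauchy sequence converges to in the ambient product space $\Sigma^{\mathbb{N}}$. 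Your version is arguably cleaner to verify (the metric computations $\ell(c_n,c_m)=2^{-(n+1)}$ and the identification of the forced limit are immediate), at the cost of importing the unproven-but-standard implication ``compact metric $\Rightarrow$ complete,'' which the paper's definition-level argument avoids; the paper's version requires checking both that the family is a cover (which silently uses the fact that every element of $Q$ beginning with $LR$ must eventually contain another $L$) and that it is irredundant. Both are valid, and your completeness argument generalizes readily to locate all the ``gaps'' in $Q$, not just this one.
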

\begin{proof}
    A sufficient condition for $(Q,\ell)$ to not be compact is if there exists an infinite open cover for $Q$ such that no open set in the cover can be removed from it without causing the remaining collection of open sets to not cover $Q$. As a convenient shorthand for describing certain open sets in this space, we can use a notation such that a finite symbol sequence of $L$'s and $R$'s followed by an underscore indicates the set of all elements in $Q$ that share that finite symbol sequence as a prefix; for example, $LR\_$ represents the set of all elements in $Q$ that share the prefix $LR$. These sets are, by Definition \ref{def:1}, always open sets in the metric topology of $(Q,\ell)$.

    Now consider the cover of $Q$ generated by the open sets $R\_$, $LL\_$, and the following infinite sequence of open sets:
    \begin{eqnarray}
        LRL\_,\ LRRL\_,\ LRRRL\_,\ LRRRRL\_,\ LRRRRRL\_,\ \dots
    \end{eqnarray}
    This is indeed a cover of $Q$ consisting of an infinite number of open sets, but none of them can be removed without causing the collection to fail to be a cover of $Q$. Therefore, $(Q,\ell)$ cannot be compact.
\end{proof}
\begin{corollary}
$(Q,\ell,\sigma)$ is a qualitative dynamical system, but not a symbolic dynamical system.
\end{corollary}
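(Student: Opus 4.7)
The plan is to verify the two assertions of the corollary separately, treating the positive claim as routine definition-checking and the negative claim as a direct appeal to Lemma \ref{lem:7}.

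First I would confirm that $(Q,\ell,\sigma)$ satisfies every clause of Definition \ref{def:1}. The set $Q$ is by construction a set of infinite sequences drawn from the finite alphabet $\Sigma = \{L,R\}$, $\ell$ as given in Eq.~6 is a bona fide metric on symbol sequences (symmetry and the ultrametric inequality are standard), and $\sigma$ is well-defined as a self-map of $Q$: shifting a sequence with no terminating infinite run of $R$'s preserves that property, and the all-$R$ sequence is a fixed point of $\sigma$. The one substantive point is the topological requirement that the metric topology of $(Q,\ell)$ agree with the product topology on $\Sigma^{\mathbb{N}}$ restricted to $Q$. I would verify this by noting that an open ball of radius $2^{-k}$ under $\ell$ around a sequence $x \in Q$ is exactly the cylinder set of all sequences in $Q$ sharing $x$'s first $k$ symbols, and that these cylinders form a basis for the product topology.

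For the negative assertion, I would simply invoke Definition \ref{def:1}'s second clause: to be a symbolic dynamical system (subshift), $(Q,\ell)$ must be compact and $Q$ must be shift-invariant. Lemma \ref{lem:7} already shows that $(Q,\ell)$ fails compactness, so the compactness requirement alone is violated and $(Q,\ell,\sigma)$ cannot be a symbolic dynamical system. (As a side remark, shift-invariance $\sigma(Q)=Q$ actually \emph{does} hold here---every $q \in Q$ is the shift of $Lq$ or of the all-$R$ sequence---so compactness is the unique obstruction.)

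The main obstacle is really absorbed by Lemma \ref{lem:7}; what remains is bookkeeping. The only place I would expect a reader to slow down is the equivalence of the metric topology induced by $\ell$ with the product topology, which is why I would spell out the cylinder-set argument explicitly rather than leave it implicit.
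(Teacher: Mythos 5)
Your proposal is correct and follows essentially the same route as the paper: the positive claim is definition-checking (the cylinder sets generate the product topology, and $\sigma$ preserves $Q$), and the negative claim is exactly the paper's intended appeal to Lemma \ref{lem:7} together with the compactness clause in Definition \ref{def:1}. Your side remark that shift-invariance does hold (so non-compactness is the sole obstruction) is accurate and a nice clarification the paper leaves implicit.
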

Since $(Q,\ell)$ is not compact, it cannot be homeomorphic to the unit interval $[0,1]$ equipped with the Euclidean topology $d$ (which is compact), and by extension $(Q,\ell,\sigma)$ cannot be topologically conjugate to $([0,1],d,f)$. 

However, we can still use the correspondence described previously between elements of $Q$ and the binary expansion of numbers in $[0,1]$ to construct a bijection $h: Q \to [0,1]$, identifying every number in the unit interval with a qualitative description of its dynamics under the dyadic map. We can then convert $h$ into an isometry, and by extension a homeomorphism, by defining a metric $d^*$ on $[0,1]$ such that 
\begin{eqnarray}
    d^*(x,y) = \ell\left(h^{-1}(x),h^{-1}(y)\right) \quad \forall x, y \in [0,1]
\end{eqnarray}
In other words, the distance between two numbers in $[0,1]$ as defined by $d^*$ is the distance between their qualitative descriptions in $Q$ as defined by $\ell$.
\begin{remark}
    \label{rem:4}$(Q,\ell,\sigma)$ is topologically conjugate to $([0,1],d^*,f)$.
\end{remark}
\begin{proof}
The isometry $h$ described previously is automatically a homeomorphism, and the effect of the shift map on an infinite sequence $x\in Q$ of $L$'s and $R$'s matches the effect of the dyadic map on the sequence of numerators in the binary expansion of the number $h(x) \in [0,1]$. As a result, $h\circ \sigma (x) =  f \circ h(x)$ for all $x \in Q$, as required for topological conjugacy.
\end{proof}

Like we did for the unit interval equipped with the Euclidean metric, we can refer to the metric space $([0,1],d^*)$ using the symbol $\mathfrak{I}_1$. And since $h$ is a homeomorphism, $\mathfrak{I}_1$ inherits the following topological properties from $(Q,\ell)$:
\begin{remark}
\label{rem:5}
$\mathfrak{I}_1 = \left([0,1],d^*\right)$ is a totally disconnected, non-compact metric space.
\end{remark}
Crucially, it also has the following important property:
\begin{theorem}
    $\mathfrak{I}_1$ is an ultracontinuum of $\mathbf{I}$.
    \label{thm:2}
\end{theorem}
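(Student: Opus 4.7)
The plan is to verify directly that $\tau^*$, the metric topology induced by $d^*$, is strictly finer than the Euclidean topology $\tau$ on $[0,1]$. First I would note that $\mathbf{I}=([0,1],d)$ is the archetypal continuum (compact, connected, uncountable), so the only substantive task is to establish $\tau\subsetneq\tau^*$. By Remark \ref{rem:4}, the map $h:(Q,\ell)\to([0,1],d^*)$ is an isometry, so the basic open sets of $\tau^*$ are precisely the $h$-images of the cylinder sets in $(Q,\ell)$.

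Next I would compute these images explicitly. For a finite prefix $s=s_1\cdots s_n$ with $k=\sum_{i=1}^n s_i\,2^{n-i}$ (reading $L=0$, $R=1$), the cylinder $s\_$ maps under $h$ to the dyadic interval $[k/2^n,(k+1)/2^n)$ when $s$ is not the all-$R$'s prefix, and to $[1-1/2^n,1]$ when it is. The closed-right behavior in the all-$R$'s case reflects the fact that $Q$ retains the single all-$R$'s sequence (encoding $1$) while discarding every other eventually-all-$R$'s sequence.

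To establish $\tau\subseteq\tau^*$, I would show that for every Euclidean open set $U$ and every $y\in U$ there is a basic $\tau^*$-neighborhood of $y$ contained in $U$. Picking $\delta>0$ with $B_d(y,\delta)\subseteq U$ and then $n$ with $1/2^n<\delta$, the length-$n$ prefix of the canonical binary expansion of $y$ (the all-$L$-tailed expansion if $y$ is a dyadic rational, and the all-$R$'s sequence if $y=1$) determines a dyadic interval of length $1/2^n$ containing $y$ and lying inside $B_d(y,\delta)\subseteq U$. For strictness, it suffices to exhibit one $\tau^*$-open set that is not $\tau$-open: the image $h(R\_)=[1/2,1]$ is $\tau^*$-open by the computation above, yet is not $\tau$-open because every Euclidean neighborhood of $1/2$ contains points strictly less than $1/2$.

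The main obstacle is the careful bookkeeping at dyadic rationals and at $x=1$: one must verify that the canonical length-$n$ prefix of a dyadic point is the $L$-tailed one (matching the exclusion built into $Q$), and that the image of the all-$R$'s cylinder is closed on the right so that $1$ is itself covered by basic $\tau^*$-neighborhoods. Once these edge cases are dispatched both inclusions are routine, and Definition \ref{def:11} then yields that $\mathfrak{I}_1$ is an ultracontinuum of $\mathbf{I}$.
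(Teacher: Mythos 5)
Your proof is correct, and it rests on the same central computation as the paper's---that $h$ carries the cylinder $s\_$ onto the dyadic half-open interval $[k/2^n,(k+1)/2^n)$, closed on the right only for the all-$R$'s prefix---but it finishes by a genuinely different and leaner route. The paper first establishes that \emph{arbitrary} half-open intervals with dyadic endpoints are $\tau^*$-open by assembling them from lexicographically consecutive cylinders of a common prefix length, and then realizes each Euclidean open ball as a countable union (plus one finite intersection) of such sets via monotone sequences of dyadic rationals. Your local-basis argument bypasses that bookkeeping entirely: given $y\in U$ with $B_d(y,\delta)\subseteq U$ and $2^{-n}<\delta$, the level-$n$ dyadic cell determined by the canonical prefix of $y$ is a single basic $\tau^*$-neighborhood of $y$ lying inside $U$, which immediately yields $\tau\subseteq\tau^*$. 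You also explicitly verify strictness via $h(R\_)=[1/2,1]$, which is $\tau^*$-open but not Euclidean-open; the paper leaves this step implicit (there it follows only indirectly from Remark \ref{rem:5}, since a totally disconnected, non-compact topology cannot coincide with that of $\mathbf{I}$), yet Definition \ref{def:11} requires a \emph{strictly} finer topology, so your version is the more complete one. The only point demanding care---which you correctly flag---is that the prefix of $y$ must come from the expansion actually realized in $Q$ (the $L$-tailed expansion at dyadic rationals, the all-$R$'s sequence at $1$), so that the chosen cell really is the image of a cylinder containing $y$.
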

\begin{proof}
The main strategy is to demonstrate that any open ball in $\mathbf{I}$ can be described as the union of the images of (potentially infinitely many) cylinder sets in $(Q,\ell)$ under the application of $h$. Since such a union must also be an open set in $\mathfrak{I}_1$,  and the open balls of $\mathbf{I}$ form a basis for the topology of $\mathbf{I}$, this would immediately imply that every open set in the metric topology of $\mathbf{I}$ is contained in the metric topology of $\mathfrak{I}_1$ as required. For simplicity, the notation for open sets of $Q$ established in the proof of Lemma \ref{lem:7} shall be reused, in which (for example) $RL\_$ refers to the set of all elements of $Q$ that share the prefix $RL$.

Consider the image of a cylinder set $P \in Q$, consisting of all elements in $Q$ that share some prefix $p$, under the action of $h$. Using the standard ordering of elements on the unit interval, the smallest element in $h(P)$ corresponds to $a = h\left(p^\frown LLLL\dots\right)$, where $\frown$ represents the concatenation operator. $a$ is, by the construction of $h$, a number in $[0,1]$ whose binary expansion terminates, and is therefore a dyadic rational—a rational number that can be expressed as a fraction whose denominator is a power of $2$. 

Using similar logic, $h(P)$ must contain the image of every element in $Q$ of the form $p^\frown R^{m\frown} LLL\dots$, where $R^m$ indicates $m \in \mathbb{N}$ repetitions of the symbol $R$ in the sequence. As a result, there exists a supremum $b$ for $h(P)$ in $[0,1]$ that is either $1$ (if $p$ consists of all $R$'s) or is not within $h(P)$ (if $p$ contains at least one $L$ in its sequence). In the latter case, this supremum $b$ is the image of the element of $Q$ with prefix $p_+$, created by switching the last $L$ symbol present in $p$ to an $R$, concatenated with an infinite $LLL\dots$ sequence. In either case, $b$ is also a dyadic rational.

Since $h(P)$ must contain every element in $[0,1]$ between $a$ and $b$ by the properties of $h$ and the real numbers, and since the above arguments are true for any cylinder set $P$ in the topology of $Q$, $\mathfrak{I}_1$ has a topological basis $\mathcal{B}$ consisting of half-open intervals of the form $[a,b)$ and $[a,1]$ for dyadic rationals $a$ and $b$ in the unit interval. 

Similarly, for any dyadic rationals $a, b \in [0,1]$, the half-open interval $[a,b)$ and the closed interval $[a,1]$ is an open set of $\mathfrak{I}_1$. In the former case, any such set can be constructed explicitly from the basis $\mathcal{B}$ by considering the sequence of $0$'s and $1$'s in the binary expansions of $a$ and $b$; if the length of the largest initial sequence of $0$'s and $1$'s in both $a$ and $b$'s binary expansion that ends with a $1$ is $t$, then one can construct two length-$t$ sequence of $L$'s and $R$'s, $p_a$ and $p_b$, by taking the first $t$ numerators of $a$ and $b$'s binary expansions and convert them to finite $L$-$R$ sequences by assigning $0\to L$ and $1 \to R$. Defining $P_a$ as the set of all elements in $Q$ that share the prefix $p_a$, $[a,b)$ can be constructed as the following union of open sets in $\mathcal{B}$:
\begin{eqnarray}
    [a,b) = h(P_a)\cup \left(\bigcup_i h\left(P_i\right)\right)
\end{eqnarray}
where each cylinder set $P_i$ consists of every element of $Q$ that shares a length-$t$ prefix $p_i$ located between $p_a$ and $p_b$ in standard lexicographical/dictionary order for $L<R$. In the case where one is interested in constructing $[a,1]$, one can use the same process as described above, but instead identifying $t$ as the length of the largest initial sequence of $0$s and $1$s in $a$'s binary representation that terminates in $1$, and replacing $p_b$ with the length-$t$ sequence of $R$'s.

Since the dyadic rationals are dense in $\mathbf{I}$, for any real number $z\in [0,1]$, one can construct sequences of dyadic rationals $z^-_n$ and $z^+_n$ that converge to $z$ such that $z^-_n < z$ for all $n\in\mathbb{N}$ and $z^+_n > z$ for all $n\in\mathbb{N}$. 

As a result, one can construct any open ball in the topology of $\mathbf{I}$ through the following operations on open sets in $\mathfrak{I}_1$:
\begin{subequations}
\begin{eqnarray}
[0,x_1) = \bigcup_{n=1}^{\infty} [0,z^-_n)
\end{eqnarray}
\begin{eqnarray}
(x_2,1] = \bigcup_{n=1}^{\infty} [z^+_n,1]
\end{eqnarray}
\begin{eqnarray}
(x_2,x_1) = [0,x_1) \cap (x_2,1]
\end{eqnarray}
\end{subequations}
for any $x_1,\ x_2 < x_1 \in [0,1]$ and any appropriate $z^-_n$ and $z^+_n$ sequences.

Therefore, every open ball in $\mathbf{I}$ can be constructed out of countable unions and finite intersections of open sets of $\mathfrak{I}_1$. Since every open set in the metric topology of $\mathbf{I}$ can be represented as a union of these open balls, and because these set operations imply that such open balls are open sets of $\mathfrak{I}_1$, every open set in $\mathbf{I}$ is an open set of $\mathfrak{I}_1$. 

Consequently, $\mathfrak{I}_1$ is an ultracontinuum of $\mathbf{I}$.
\end{proof}

Notice that the metric $d^*$ on numbers in the unit interval behaves very differently than its Euclidean counterpart $d$; see Figure \ref{fig:ultrametric} for a comparison of their values using the reference element $\frac{1}{2}$.

\begin{figure}
\includegraphics[width=0.47\textwidth,keepaspectratio]{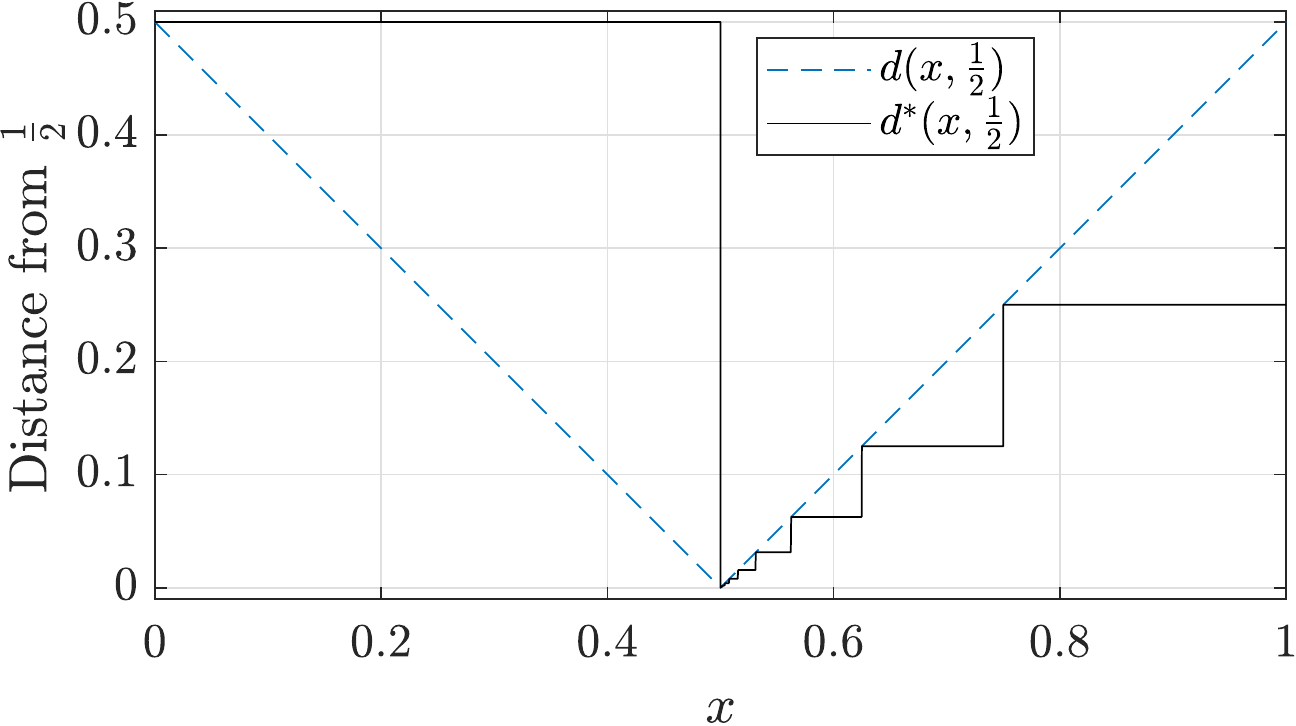}% Here is how to import EPS art
\caption{\label{fig:ultrametric} A graph of the distance between a given element $x \in [0,1]$ and $\frac{1}{2}$, as "measured" by the standard Euclidean metric $d$ and the ultracontinuum metric $d^*$ described previously. Note the self-similar structure of the graph for the ultracontinuum metric as $x\to \frac{1}{2}^+$.}
\end{figure}

As a result of these differences, it is difficult to establish direct comparisons of distances between elements in either metric space. For example, consider the following two sequences of distances between identical elements in $[0,1]$:
\begin{subequations}
\label{eq:whole}
\begin{eqnarray}
s^*_n = d^*\left(\frac{1}{2},\frac{1}{2} - \frac{1}{2^{(n+1)}}\right)
\end{eqnarray}
\begin{eqnarray}
s_n = d\left(\frac{1}{2},\frac{1}{2} - \frac{1}{2^{(n+1)}}\right)
\end{eqnarray}
\end{subequations}
for $n \in \mathbb{N}$. $s_n$ clearly converges to $0$ as $n \to \infty$, but $s^*_n = \frac{1}{2}$ for all $n \in \mathbb{N}$, demonstrating that statements about separation in the ultracontinuum may not hold in the subsidiary continuum; an obstacle towards making direct claims, for example, about sensitivity to initial conditions that hold in both spaces.

These results set the stage for the analysis to follow; if $(Q, \ell, \sigma)$ can be shown to be topologically mixing, topological conjugacy ensures that $([0,1],d^*,f)$ is also mixing (Remark \ref{rem:2}), and Lemma \ref{lem:2} with Theorem \ref{thm:2} guarantee $([0,1],d,f)$ is topologically mixing as well. 
\begin{theorem}
    $(Q,\ell,\sigma)$ is topologically mixing.
\label{thm:3}
\end{theorem}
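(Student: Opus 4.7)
The plan is to exploit the fact that cylinder sets form a basis for the metric topology of $(Q,\ell)$, as established earlier in the paper, and reduce the mixing condition to a combinatorial statement about matching prefixes. Since mixing is preserved under shrinking to subsets contained in the open sets $U$ and $V$, it suffices to verify the mixing condition on pairs of cylinder sets $U = p\_$ and $V = q\_$, where $p$ and $q$ are finite words in $\{L,R\}$ of lengths $k$ and $m$ respectively.

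The core construction is the following. For any integer $N \geq k$, I would explicitly produce an element of $\sigma^N(p\_) \cap q\_$ by building a sequence
\begin{equation}
x = p \,{}^\frown\, w \,{}^\frown\, q \,{}^\frown\, LLLL\cdots
\end{equation}
where $w$ is any word of length $N - k$ (empty if $N=k$). By construction, $x$ has prefix $p$ (so $x \in p\_$) and satisfies $\sigma^N(x) = q \,{}^\frown\, LLL\cdots$, which lies in $q\_$. Thus taking $n = k$ yields $\sigma^N(p\_) \cap q\_ \neq \emptyset$ for every $N \geq n$.

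The one step requiring genuine care, and the step I expect to be the main subtlety, is verifying that the constructed sequence $x$ actually belongs to $Q$. Recall from the discussion preceding Lemma \ref{lem:7} that $Q$ omits every infinite sequence that terminates with an infinite string of $R$'s (except for the all-$R$'s sequence itself). The reason the construction above is safe is precisely that $x$ ends in an infinite tail of $L$'s, so regardless of what $p$, $w$, or $q$ happen to be, the sequence $x$ is never of the excluded type. This is why padding with $L$'s rather than arbitrary symbols matters.

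Finally, I would lift this from cylinder sets to arbitrary nonempty open sets $U, V \subseteq Q$. Since cylinder sets form a basis, I can pick cylinder sets $p\_ \subseteq U$ and $q\_ \subseteq V$, and then for $n = |p|$ the containment $\sigma^N(U) \supseteq \sigma^N(p\_)$ gives $\sigma^N(U) \cap V \supseteq \sigma^N(p\_) \cap q\_ \neq \emptyset$ for every $N \geq n$. This establishes topological mixing of $(Q,\ell,\sigma)$, and when combined with Remark \ref{rem:2}, Theorem \ref{thm:2}, and Lemma \ref{lem:2}, closes the chain of implications concluding that the dyadic map on $\mathbf{I}$ is topologically mixing everywhere on the interval.
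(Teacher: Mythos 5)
Your proposal is correct and takes essentially the same approach as the paper: reduce mixing to cylinder sets, exhibit an explicit element $p^\frown(\text{padding})^\frown q^\frown LLL\cdots$ for each $N\geq |p|$, and lift to arbitrary open sets via the basis. The only (welcome) addition is your explicit check that the constructed sequence lies in $Q$ because its infinite $L$-tail avoids the excluded terminating-$R$ sequences, a point the paper's proof leaves implicit.
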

\begin{proof}
    Given that the cylinder sets of $(Q,\ell)$ form a topological basis, it suffices to show that for every pair of cylinder sets $U$ and $V$ in the topology of $(Q,\ell)$, there exists an integer $N\geq 0$ such that $\sigma^n(U) \cap V \neq \emptyset$ for all integer $n \geq N$; since every open set in the topology of this space can be represented as a union of these cylinder sets, topological mixing on these sets implies mixing on the whole space. Once again, the shorthand notation for these sets used in the proof of Lemma \ref{lem:7} is used (where, for example, $LR\_$ is the set of all elements of $Q$ that share the prefix $LR$).

    Consider any two such cylinder sets $p_1\_$ and $p_2\_$, where $p_1$ and $p_2$ are some finite sequence of $L$'s and $R$'s of length $m_1$ and $m_2$ respectively. For any integer $n \geq m_1$, we can identify an element $x \in Q$ of the form 
    \begin{equation}
        x = p_1^\frown L^{(n - m_1)\frown} p_2 ^\frown LLLLLLL\dots
    \end{equation}
    where $L^{n - m_1}$ is a sequence of $n - m_1$ consecutive $L$'s. This element clearly belongs to $p_1\_$ and, after $n$ shifts, belongs both to $\sigma^n(p_1\_)$ and $p_2\_$, indicating the intersection of these open sets is non-empty.

    Since such an element $x$ can be constructed for all $n \geq m_1$, then $\sigma^n(p_1\_)\cap p_2\_ \neq \emptyset$ for all $n \geq m_1$, which immediately implies that there exists an $N \in \mathbb{N}$, $m_1$, for any two cylinder sets $p_1\_$ and $p_2\_$ such that $\sigma^n(p_1\_)\cap p_2\_ \neq \emptyset$ for all $n \geq N$. Finally, since every open set in the topology of $(Q,\ell)$ is a union of these cylinder sets, there exists an $N \in \mathbb{N}$ for any two open sets $U$ and $V$ such that $\sigma^n(U)\cap V \neq \emptyset$ for all $n \geq N$, demonstrating that $(Q,\ell,\sigma)$ is topologically mixing.
\end{proof}
\begin{corollary}
    The dyadic map on the unit interval, equipped with the Euclidean metric, is topologically mixing.
\end{corollary}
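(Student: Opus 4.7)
The plan is to chain together the structural results already established in the paper, since all the heavy lifting has been done in Theorem \ref{thm:3}, Theorem \ref{thm:2}, Remark \ref{rem:4}, Remark \ref{rem:2}, and Lemma \ref{lem:2}. There is essentially no new argument required; the corollary is a payoff of the framework.

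First I would invoke Theorem \ref{thm:3} to assert that the qualitative dynamical system $(Q,\ell,\sigma)$ is topologically mixing. Next, I would use Remark \ref{rem:4}, which establishes a topological conjugacy between $(Q,\ell,\sigma)$ and $([0,1],d^*,f)$ via the isometry $h$, together with Remark \ref{rem:2}, which asserts that topological mixing is preserved under topological conjugacy. Together these give that $([0,1],d^*,f)$ is topologically mixing.

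Finally, I would apply Theorem \ref{thm:2}, which establishes that $\mathfrak{I}_1 = ([0,1],d^*)$ is an ultracontinuum of $\mathbf{I} = ([0,1],d)$, and then invoke Lemma \ref{lem:2}, which tells us that topological mixing on an ultracontinuum system $(X,d^*,f)$ descends to topological mixing on the subsidiary continuum system $(X,d,f)$. Applying this with $X = [0,1]$ yields that $([0,1],d,f)$ is topologically mixing, as required.

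There is no real obstacle here, since the corollary is exactly the use case the ultracontinuum machinery was built to handle. The only thing to be careful about is ordering the citations correctly and noting explicitly that mixing is preserved in both directions along the chain: first sideways (from $Q$ to $\mathfrak{I}_1$ via homeomorphism) and then downward (from $\mathfrak{I}_1$ to $\mathbf{I}$ via the ultracontinuum-to-continuum refinement inclusion $\tau \subset \tau^*$). Since $f$ is discontinuous on $\mathbf{I}$, one should also note that Lemma \ref{lem:4} is not being invoked here; only transitivity/mixing is being transferred, which is the content of Lemma \ref{lem:2} and does not require continuity of $f$.
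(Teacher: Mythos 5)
Your proposal is correct and follows exactly the same chain as the paper's own proof: Theorem \ref{thm:3} gives mixing of $(Q,\ell,\sigma)$, Remarks \ref{rem:4} and \ref{rem:2} transfer it to $([0,1],d^*,f)$ by conjugacy, and Theorem \ref{thm:2} with Lemma \ref{lem:2} descend it to $([0,1],d,f)$. Your added observation that Lemma \ref{lem:4} is not needed here (since only mixing, not chaos, is being transferred, and $f$ is discontinuous on $\mathbf{I}$) is a correct and worthwhile clarification.
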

\begin{proof}
    Since $(Q,\ell,\sigma)$ is topologically mixing, and $([0,1],d^*,f)$ is topologically conjugate to it (Remark \ref{rem:4}), Remark \ref{rem:2} implies $([0,1],d^*,f)$ is also topologically mixing. And since $\mathfrak{I}_1$ is an ultracontinuum of $\mathbf{I}$ (Theorem \ref{thm:2}), $([0,1],d,f)$ is topologically mixing as well (Lemma \ref{lem:2}).
\end{proof}
\subsection{\label{chaos}Example: Chaos in the 2-Tent Map}
Consider the discrete dynamical system $(X,d,f_2)$ where $X = \left[0,1\right]$ is the closed unit interval, $d$ is the Euclidean metric on the interval, and $f_2: [0,1] \to [0,1]$ is the 2-tent map, defined such that
\begin{eqnarray}
f_2(x) = 
    \begin{cases}
        2x & \text{if } 0 \leq x < \frac{1}{2}\\
        2(1-x) & \text{if } \frac{1}{2} \leq x \leq 1\\
    \end{cases}
\end{eqnarray}
An image of this mapping is shown in Figure \ref{fig:tentmap2}.

\begin{figure}
\includegraphics[width=0.47\textwidth,keepaspectratio]{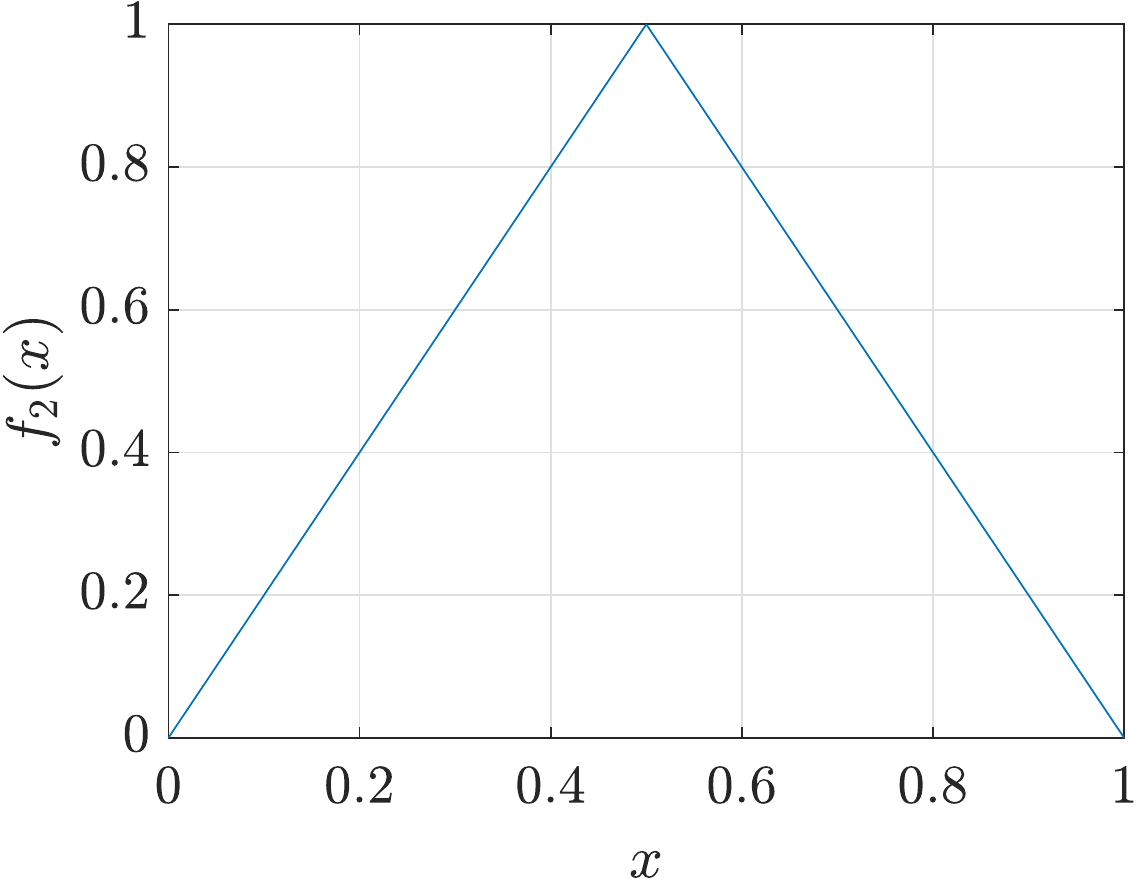}% Here is how to import EPS art
\caption{\label{fig:tentmap2} A graph of the $2$-tent map $f_2$ described in Equation 12. This mapping is continuous, unlike the dyadic map shown in Fig. \ref{fig:dyadicmap}, and maps from $[0,1]$ to itself unlike the $3$-tent map in Fig. \ref{fig:tentmap3}.}
\end{figure}
This map is quite similar to the dyadic map from the previous section; it is also defined on the "standard" unit interval $\mathbf{I} = ([0,1],d)$, and is also piecewise linear. The key difference is that the tent map is continuous on $\mathbf{I}$, and therefore the Banks-Brooks-Cairns-Davis-Stacey theorem applies to it; the goal is to demonstrate that this mapping is chaotic using the tools described and used previously, along with this theorem.

Tent maps in general---those in which the $2$ in the definition of $f_2$ is replaced with some other real number---are considered the elementary paradigm for so-called "stretch-and-fold" chaotic behavior and appear frequently in introductory discussions on the subject\cite{alligood2012chaos,Elaydi2007}. As such, this specific form of the tent map can also be considered the elementary paradigm for stretch-and-fold chaos on a continuum.

If we qualitatively partition the interval into the same regions than those generated in Section \ref{sec:mixing}---$L$ representing $\left[0,\frac{1}{2}\right)$ and $R$ representing $\left[\frac{1}{2},1\right]$---we can attempt to construct a conjugacy between $(X,d,f_2)$ and some qualitative dynamical system $(Q_2,\ell,\sigma)$ where $Q_2$ is made from the alphabet $\Sigma = \{L,R\}$.

As before, not all infinite symbol sequences of $L$'s and $R$'s represent a trajectory in $(X,d,f_2)$. For example, there is no element in $\mathbf{I}$ whose dynamics is described by the infinite symbol sequence $LRLLLLL\dots$; as the only number that maps to $0$ on the right side of the interval is $1$, and there is no number smaller than $\frac{1}{2}$ that maps to $1$. By extension, no infinite symbol sequence which contains the "suffix" $LRLLLLLL\dots$ can belong in $Q_2$.

Although the dynamics of the tent map are more complex than that of the dyadic map, one can readily develop a bijection between $[0,1]$ and the elements of $Q_2$ via the same process in the previous example; creating a correspondence between the infinite sequences in $Q_2$, and the infinite sequences $\chi_i$ of numerators in the binary expansion of a number in $[0,1]$. Implementing the $LRLLLL\dots$ restriction described previously, one can use the Mealy machine\cite{Mealy1955} in Fig. \ref{fig:mealy} to transform a qualitative sequence in $Q_2$ into a number in $[0,1]$ whose dynamics under the tent map matches that sequence. This process matches the algorithmic process described by Cvitanovi{\'c}, Artuso, Mainieri, Tanner \& Vattay\cite{ChaosBook}, and implicitly defines a bijection $h_2: Q_2 \to [0,1]$.
\begin{figure}
\includegraphics[width=0.47\textwidth,keepaspectratio]{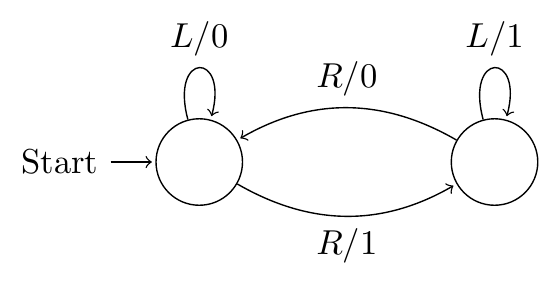}% Here is how to import EPS art
\caption{\label{fig:mealy} The following automaton converts infinite sequences of $L$'s and $R$'s into $\chi_i$ sequences of $0$'s and $1$'s. At the "starting" node, the automaton reads the first symbol of the $L$-$R$ sequence, outputs the symbol ($0$ or $1$) on the corresponding outgoing arrow from that node, and then transitions to the node to which the arrow is pointing to. The process thus repeats for the current node and following $L$-$R$ symbol. These $\chi_i$ sequences represent the numerators of the binary expansion of a number in $[0,1]$, which by extension associates a qualitative sequence in $Q_2$ with the number in $[0,1]$ whose dynamics under the $2$-tent map are described by that qualitative sequence.}
\end{figure}

As an example, consider the period-2 sequence $LRLRLRLR\dots \in Q_2$. According to the Mealy machine in Fig. 5, this generates the period-4 $\chi_i$ sequence $01100110\dots$, which then corresponds to the number $2/5$:
\begin{eqnarray}
    \frac{2}{5} = \frac{0}{2} + \frac{1}{2^2} + \frac{1}{2^3} + \frac{0}{2^4} + \frac{0}{2^5} + \frac{1}{2^6} + \frac{1}{2^7} + \frac{0}{2^8}\dots
\end{eqnarray}
One can readily verify that the dynamics of $2/5$ are indeed well-described by the originating qualitative sequence $LRLRLR\dots$:
\begin{subequations}
\label{eq:210evol}
\begin{eqnarray}
    \frac{2}{5} \in \left[0,\frac{1}{2}\right)
\end{eqnarray}
\begin{eqnarray}
    f_2\left(\frac{2}{5}\right) = \frac{4}{5}  \in \left[\frac{1}{2},1\right]
\end{eqnarray}
\begin{eqnarray}
    f^2_2\left(\frac{2}{5}\right) = \frac{2}{5} \in \left[0,\frac{1}{2}\right)
\end{eqnarray}
\end{subequations}
By defining a metric $d_2^*$ similar to the metric $d^*$ defined in Eq. 8,
\begin{eqnarray}
    d_2^*(x,y) = \ell\left(h_2^{-1}(x),h_2^{-1}(y)\right) \quad \forall x, y \in [0,1]
\end{eqnarray}
$h_2$ is an isometry and therefore a homeomorphism from $(Q_2,\ell)$ to $([0,1],d_2^*)$.
\begin{remark}
\label{rem:6}
    $(Q_2,\ell,\sigma)$ is topologically conjugate to $(X,d^*_2,f_2)$.
\end{remark}
\begin{proof}
    The isometry $h_2$ described previously is automatically a homeomorphism, and the effect of the shift map on an infinite sequence $x\in Q_2$ of $L$'s and $R$'s matches the effect of the $2$-tent map on the $\chi_i$ sequence of numerators in the binary expansion of the number $h^{-1}_2(x) \in [0,1]$. As a result, $h_2\circ \sigma (x) =  f_2 \circ h_2(x)$ for all $x \in Q_2$, as required for topological conjugacy.
\end{proof}
The process that remains is quite similar to that in Section \ref{sec:mixing}; establish that $\mathfrak{I}_2 = (X,d_2^*)$ is an ultracontinuum of $\mathbf{I}$, and then show that $(Q_2,\ell,\sigma)$ is chaotic, which then implies that $(X,d,f_2)$ is chaotic as well. 
\begin{theorem}
\label{thm:4}
    $\mathfrak{I}_2$ is an ultracontinuum of $\mathbf{I}$.
\end{theorem}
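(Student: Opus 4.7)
The plan is to mirror the strategy used in the proof of Theorem \ref{thm:2}, replacing the dyadic map's order-preserving binary encoding with the tent-map encoding effected by the Mealy machine underlying $h_2$. The goal is to show that the image $h_2(P)$ of each cylinder set $P \subseteq Q_2$ is an interval in $[0,1]$ with dyadic rational endpoints; once this is established, the density of dyadic rationals in $[0,1]$ allows every open ball of $\mathbf{I}$ to be expressed as countable unions and finite intersections of such images, giving containment of the metric topology of $\mathbf{I}$ inside that of $\mathfrak{I}_2$.

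The first step is to establish the recursive structure of the cylinder set images. Writing $I(p) = h_2(P_p)$ for the image of the cylinder set with prefix $p$, the base cases are $I(L\_) = [0, 1/2)$ and $I(R\_) = [1/2, 1]$. Directly from the definition of $f_2$, one has $I(L \cdot p) = \{y/2 : y \in I(p)\}$ and $I(R \cdot p) = \{1 - y/2 : y \in I(p)\}$: the first transformation is an order-preserving linear scaling, while the second is an orientation-reversing reflection, and both map dyadic-endpoint intervals to dyadic-endpoint intervals. By induction on the length of $p$, every $I(p)$ is therefore an interval in $[0,1]$ with dyadic rational endpoints. Once this basis structure is in hand, the remainder of the argument parallels Theorem \ref{thm:2}: for any dyadic rationals $a,b \in [0,1]$, an interval of the appropriate open/closed type between them is constructible as a finite union of cylinder set images by iterating the recursion to a sufficient dyadic resolution, and then, using density of dyadic rationals in $\mathbf{I}$, every open ball of $\mathbf{I}$ can be built as a countable union and finite intersection of these sets of $\mathfrak{I}_2$.

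The main obstacle is the bookkeeping required by the orientation-reversing behavior of $f_2$ on $[1/2,1]$: unlike in the dyadic case where every cylinder set image is of the form $[a,b)$ or $[a,1]$, the reflection $I(R \cdot p) = 1 - I(p)/2$ can also produce intervals of the form $(a,b]$, $[a,b]$, or $(a,b)$, depending on the parity of $R$'s in $p$. This variability must be tracked carefully through the induction so that intervals with arbitrary dyadic endpoints can still be assembled from the basis. A minor additional subtlety is the exclusion from $Q_2$ of sequences ending in $LRLLL\ldots$, but since these correspond to only countably many specific values in $[0,1]$ and do not disrupt the interval structure of generic cylinder set images, the main argument carries through.
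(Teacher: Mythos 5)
Your proposal takes essentially the same route as the paper's proof: compute the images of cylinder sets under $h_2$, show by a self-similar recursion that each is an interval with dyadic rational endpoints, and then assemble every Euclidean open ball from countable unions and finite intersections of these images. The overall plan is sound, but one step needs correcting: the $L$-branch recursion should be $I(L\cdot p) = \{y/2 : y \in I(p)\} \cap [0,\tfrac{1}{2})$, since a point whose itinerary begins with $L$ must lie in $[0,\tfrac{1}{2})$. As written, your formula gives $I(LR\_) = [\tfrac{1}{4},\tfrac{1}{2}]$ instead of the correct $[\tfrac{1}{4},\tfrac{1}{2})$ (the point $\tfrac{1}{2}$ has itinerary $RRLLL\dots$ and belongs to $I(RR\_)$), so same-length cylinder images would fail to partition the interval and the type-tracking induction would be built on a false rule. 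The discrepancy occurs exactly when $1 \in I(p)$, i.e., when $p$ is a prefix of $RLLL\dots$; this is also the precise manifestation of the forbidden suffix $LRLLL\dots$, whose sequences correspond to \emph{no} point of $[0,1]$ (not to "countably many values"), and whose exclusion is what opens the right endpoint of the scaled image. Relatedly, the interval type is not determined by the parity of $R$'s alone: for instance $I(LRL\_) = (\tfrac{3}{8},\tfrac{1}{2})$ is open at both ends. With the corrected recursion, your bookkeeping goes through and the remainder of the argument coincides with the paper's.
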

\begin{proof}
The strategy is---as in the proof of Theorem \ref{thm:2}---to demonstrate that any open ball in $\mathbf{I}$ can be described as the union of the images of (potentially infinitely many) cylinder sets in $(Q_2,\ell)$. The notation for open sets of $Q_2$ established in the proof of Lemma \ref{lem:7} and elsewhere is reused, such that $lRL\_$ refers to the set of all elements of $Q_2$ that share the prefix $LRL$.

It is helpful for intuition to observe how the image of all cylinder sets with prefixes of specific length partition the interval. For example, $h_2(L\_) = \left[0,\frac{1}{2}\right)$ and $h_2(R\_) = \left[\frac{1}{2},1\right]$ by definition. Now consider the cylinder sets $LL\_$, $LR\_$, $RL\_$, and $RR\_$---their images onto the interval under the action of $h_2$ can be readily verified through $h_2$ and/or the Mealy machine in Fig. \ref{fig:mealy} to be:
\begin{subequations}
\label{eq:partitions1}
\begin{eqnarray}
h_2(LL\_) = \left[0,\frac{1}{4}\right)
\end{eqnarray}
\begin{eqnarray}
h_2(LR\_) = \left[\frac{1}{4},\frac{1}{2}\right)
\end{eqnarray}
\begin{eqnarray}
h_2(RL\_) = \left(\frac{3}{4},1\right]
\end{eqnarray}
\begin{eqnarray}
h_2(RR\_) = \left[\frac{1}{2},\frac{3}{4}\right]
\end{eqnarray}
\end{subequations}
Now consider the "subpartition" of $h_2(RL\_)$ into $h_2(RLL\_)$ and $h_2(RLR\_)$:
\begin{subequations}
\label{eq:partitions2}
\begin{eqnarray}
h_2(RLL\_) = \left(\frac{7}{8},1\right]
\end{eqnarray}
\begin{eqnarray}
h_2(RLR\_) = \left(\frac{3}{4},\frac{7}{8}\right]
\end{eqnarray}
\end{subequations}
As partially illustrated by Eqs. \ref{eq:partitions1} \& \ref{eq:partitions2}, the endpoints and type of interval that each of these cylinder sets subpartitions into follow self-similar rules. For example, any image of a cylinder set of the form $\left[a,b\right)$ can be partitioned into two cylinder sets $\left[a,\frac{b}{2}\right)$ and $\left[\frac{b}{2},b\right)$ for $a, b \in [0,1]$. Similarly, such a set of the form $\left(a,b\right]$ can be split into two cylinder sets $\left(a,\frac{b}{2}\right]$ and $\left(\frac{b}{2},b\right]$, and sets of the form $\left[a,b\right]$ can be split into $\left[a,\frac{b}{2}\right]$ and $\left(\frac{b}{2},b\right]$. From induction, each of these endpoints must be dyadic rationals.

Given any dyadic rationals $a,b \in [0,1]$, one can therefore always construct open sets in $\mathfrak{I}_2$ of the form $[0,a)$ for $a \leq 1/2$, $[0,a]$ for $a > 1/2$, $(b,1]$ for $b \geq 3/4$ and $[b,1]$ for $b < 3/4$ by judiciously subpartitioning, and taking unions of, the images of cylinder sets described previously. 

Since the dyadic rationals are dense in $\mathbf{I}$, for any number $x\in [0,1]$, one can construct sequences of dyadic rationals $z^-_n$ and $z^+_n$ that converge to $x$ such that $z^-_n < z$ for all $n\in\mathbb{N}$ and $z^+_n > z$ for all $n\in\mathbb{N}$. 

As a result, one can construct any open ball in the metric topology of $\mathbf{I}$ through the following operations on open sets in $\mathfrak{I}_2$:

\begin{subequations}
\begin{eqnarray}
[0,x_1) = \bigcup_{n=1}^{\infty} [0,z^-_n)\ \text{if} \ x_1 \leq \frac{1}{2}, \ \text{else} \ \bigcup_{n=1}^{\infty} [0,z^-_n]
\end{eqnarray}
\begin{eqnarray}
(x_2,1] = \bigcup_{n=1}^{\infty} (z^+_n,1]\ \text{if} \ x_2 \geq \frac{3}{4}, \ \text{else} \ \bigcup_{n=1}^{\infty} [z^+_n,1]
\end{eqnarray}
\begin{eqnarray}
(x_2,x_1) = [0,x_1) \cap (x_2,1]
\end{eqnarray}
\end{subequations}
for any $x_1,\ x_2 < x_1 \in [0,1]$ and any appropriate $z^-_n$ and $z^+_n$ sequences.

Therefore, every open ball in $\mathbf{I}$ can be constructed out of countable unions and finite intersections of open sets of $\mathfrak{I}_2$. Since every open set in the metric topology of $\mathbf{I}$ can be represented as a union of these open balls, and because these set operations imply that such open balls are open sets of $\mathfrak{I}_2$, every open set in $\mathbf{I}$ is an open set of $\mathfrak{I}_2$. 

Consequently, $\mathfrak{I}_2$ is an ultracontinuum of $\mathbf{I}$.
\end{proof}
\begin{corollary}
    $\mathfrak{I}_2$ and $(Q_2,\ell)$ are not compact.
\end{corollary}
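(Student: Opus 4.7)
The plan is to observe that both non-compactness claims follow almost immediately from results already established earlier in the paper, without needing any new construction analogous to the explicit covering argument used in Lemma \ref{lem:7}. The key observation is that Theorem \ref{thm:4} has just identified $\mathfrak{I}_2$ as an ultracontinuum, so the structural obstruction proved in Lemma \ref{lem:6}---that no ultracontinuum can be compact---applies directly to $\mathfrak{I}_2$.

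First I would invoke Lemma \ref{lem:6} on $\mathfrak{I}_2$, citing Theorem \ref{thm:4} to justify the hypothesis that $\mathfrak{I}_2$ is an ultracontinuum of $\mathbf{I}$. This instantly yields that $\mathfrak{I}_2$ is not a compact metric space.

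Next I would transfer this conclusion to $(Q_2,\ell)$ via the conjugacy established in Remark \ref{rem:6}. The map $h_2$ constructed in the preceding discussion is an isometry, and in particular a homeomorphism, from $(Q_2,\ell)$ to $\mathfrak{I}_2 = ([0,1], d_2^*)$. Since compactness is a topological invariant preserved under homeomorphism, the non-compactness of $\mathfrak{I}_2$ forces $(Q_2,\ell)$ to be non-compact as well.

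There is no real obstacle here; the corollary is essentially a bookkeeping step that combines Theorem \ref{thm:4}, Lemma \ref{lem:6}, and the homeomorphism underlying Remark \ref{rem:6}. The only mild subtlety worth flagging explicitly, to keep the argument self-contained, is that the homeomorphism $h_2$ is genuinely between $(Q_2,\ell)$ and $\mathfrak{I}_2$ as metric (not just dynamical) spaces, which is precisely why compactness transfers; the conjugating map carries both the symbolic-space topology and the ultracontinuum topology into one another, so the topological property of non-compactness propagates in either direction.
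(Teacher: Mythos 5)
Your argument is correct and matches the route the paper clearly intends for this (unproved) corollary: Theorem \ref{thm:4} plus Lemma \ref{lem:6} gives non-compactness of $\mathfrak{I}_2$, and the isometry $h_2$ underlying Remark \ref{rem:6} transfers this to $(Q_2,\ell)$ since compactness is a homeomorphism invariant (Remark \ref{rem:7}). Nothing is missing.
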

All that's left is to prove that the qualitative dynamical system that reflects the dynamics of the tent map on the interval is chaotic---that the tent map is itself chaotic on $\mathbf{I}$ follows via corollary.
\begin{theorem}
\label{thm:5}
    $(Q_2,\ell,\sigma)$ is chaotic.
\end{theorem}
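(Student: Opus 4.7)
The plan is to verify the three conditions in Definition \ref{def:8} for $(Q_2,\ell,\sigma)$---topological transitivity, dense periodic orbits, and sensitivity---while invoking the Banks-Brooks-Cairns-Davis-Stacey theorem (Theorem \ref{thm:1}) so that sensitivity reduces to transitivity together with dense periodic orbits. The continuity hypothesis of that theorem follows from a short calculation: if $\ell(x,y)=2^{-k}$ then $x$ and $y$ share a prefix of length $k$, so $\sigma(x)$ and $\sigma(y)$ share a prefix of length at least $k-1$, giving $\ell(\sigma(x),\sigma(y)) \leq 2\,\ell(x,y)$, so $\sigma$ is Lipschitz on $(Q_2,\ell)$. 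Throughout the remainder, the recurring subtlety is that $Q_2$ excludes precisely those sequences which carry $LRLLL\dots$ as an eventual suffix, so every sequence I build must be checked against this restriction.

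For topological transitivity I would prove the stronger property of mixing by adapting the construction of Theorem \ref{thm:3}. Since cylinder sets form a basis for the topology of $(Q_2,\ell)$, it suffices to consider two cylinder sets $p_1\_$ and $p_2\_$ of lengths $m_1$ and $m_2$, reusing the shorthand from the proof of Lemma \ref{lem:7}. For any integer $n \geq m_1$, I would exhibit the element
\begin{equation}
x = p_1^\frown L^{(n-m_1)\frown} p_2 ^\frown RRRR\dots
\end{equation}
whose tail is the constant-$R$ sequence and is therefore not eventually $LLLL\dots$, so $x$ does not contain $LRLLL\dots$ as a suffix and lies in $Q_2$. By construction $x \in p_1\_$ while $\sigma^n(x) \in p_2\_$, establishing $\sigma^n(p_1\_)\cap p_2\_ \neq \emptyset$ for every $n \geq m_1$ and hence mixing on all of $(Q_2,\ell,\sigma)$.

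For dense periodic orbits, given any cylinder set $p\_$ with $|p|=m$ I would exhibit the purely periodic sequence $p^\infty = p^\frown p^\frown p^\frown \dots$, which lies in $p\_$ and satisfies $\sigma^m(p^\infty)=p^\infty$. Its membership in $Q_2$ splits into two cases: if $p$ is all $L$'s then $p^\infty=LLLL\dots$ contains no $R$ at all and hence no $LRLLL\dots$ tail; otherwise $p^\infty$ contains infinitely many $R$'s and so is not eventually constant in $L$, again avoiding the forbidden suffix. Every basis element therefore contains a periodic point, which delivers dense periodic orbits.

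With transitivity and dense periodic orbits established and $\sigma$ continuous on $(Q_2,\ell)$, Theorem \ref{thm:1} automatically supplies sensitivity, so $(Q_2,\ell,\sigma)$ is chaotic. I expect the main obstacle to be purely bookkeeping: every constructed witness has to dodge the $LRLLL\dots$ tail, and if $Q_2$ turned out to carry additional forbidden patterns beyond the one identified in the paper's discussion, each construction would need to be revisited. As stated, however, the tail choice $RRRR\dots$ for the mixing witness and the self-repeating structure of $p^\infty$ for the periodic witness both bypass the single forbidden-tail constraint, so the argument amounts to a transcription of Theorem \ref{thm:3}'s proof plus a textbook density construction, capped off by invoking Theorem \ref{thm:1}.
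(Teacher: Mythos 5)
Your proposal is correct and follows essentially the same route as the paper: cylinder-set witnesses of the form $p_1^\frown\cdots^\frown p_2^\frown RRRR\dots$ for transitivity, the purely periodic sequence $p^\frown p^\frown\dots$ for dense periodic orbits, and the Banks--Brooks--Cairns--Davis--Stacey theorem to obtain sensitivity. The only differences are small strengthenings on your part---the $L^{(n-m_1)}$ padding upgrades transitivity to mixing, and you explicitly verify the Lipschitz continuity of $\sigma$ and the $Q_2$-membership of each witness against the forbidden suffix $LRLLL\dots$, details the paper leaves implicit.
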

\begin{proof}
    Given that the cylinder sets of $(Q_2,\ell)$ form a topological basis, it suffices to show that for every pair of cylinder sets $U$ and $V$ in the topology of $(Q_2,\ell)$, there exists an integer $n\geq 0$ such that $\sigma^n(U) \cap V \neq \emptyset$; since every open set in the topology of this space can be represented as a union of these cylinder sets, topological transitivity on these sets implies mixing on the whole space. Once again, the shorthand notation for these sets used in the proof of Lemma \ref{lem:7} is used (where, for example, $LR\_$ is the set of all elements of $Q$ that share the prefix $LR$).

    Consider any two such cylinder sets $p_1\_$ and $p_2\_$, where $p_1$ and $p_2$ are some finite sequence of $L$'s and $R$'s of length $m_1$ and $m_2$ respectively. For any such cylinder sets, we can identify an element $x \in Q$ of the form 
    \begin{equation}
        x = p_1^\frown p_2 ^\frown RRRRRR\dots
    \end{equation}
     This element clearly belongs to $p_1\_$ and, after $m_1$ shifts, belongs both to $\sigma^n(p_1\_)$ and $p_2\_$, indicating the intersection of these open sets is non-empty.

    Since such an element $x$ can be constructed for all choices $p_1\_$ and $p_2\_$, then $\sigma^n(p_1\_)\cap p_2\_ \neq \emptyset$ for all $n \geq m_1$, and since every open set in the topology of $(Q_2,\ell)$ is a union of these cylinder sets, $(Q_2,\ell,\sigma)$ is topologically transitive.

    To demonstrate $(Q_2,\ell,\sigma)$ has dense periodic orbits, consider any cylinder set $p_3\_$ in the topology of $(Q_2,\ell)$.  For any such set, there exists an element $ y \in Q_2$ and in $p_3\_$ of the form $p_3^{\frown}p_3^{\frown}p_3^{\frown}\dots$, which is clearly periodic under the action of the shift map. Since such an element can be constructed for all cylinder sets in the topology of $(Q_2,\ell)$, and every open set in this topology can be represented as a union of these cylinder sets, $(Q_2,\ell,\sigma)$ has dense periodic orbits.

    Since $(Q_2,\ell,\sigma)$ is topologically transitive and has dense periodic orbits, and $\sigma: Q_2 \to Q_2$ is a continuous mapping, the Banks-Brooks-Cairns-Davis-Stacey theorem\cite{Banks1992} implies $(Q_2,\ell,\sigma)$ is also sensitive, and therefore chaotic.
\end{proof}
\begin{corollary}
    The $2$-tent map acting on the "standard" unit interval $\mathbf{I}$ is chaotic.
\end{corollary}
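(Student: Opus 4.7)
The plan is to assemble the corollary from pieces that are already in place in Sections \ref{Ultracontinuum} and \ref{chaos}; at this stage no new machinery is needed, and the only task is to chain together the right implications in the right order.

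First, I would invoke Theorem \ref{thm:5} to obtain that the qualitative dynamical system $(Q_2,\ell,\sigma)$ is chaotic. Since Remark \ref{rem:6} establishes that $(Q_2,\ell,\sigma)$ is topologically conjugate to $([0,1],d_2^*,f_2)$, and since Remark \ref{rem:2} guarantees that topological transitivity and dense periodic orbits are preserved by topological conjugacy, I can conclude that $([0,1],d_2^*,f_2)$ is itself topologically transitive and has dense periodic orbits. A brief appeal to the continuity of the shift $\sigma$ on $(Q_2,\ell)$, together with the Banks-Brooks-Cairns-Davis-Stacey theorem (Theorem \ref{thm:1}), yields sensitivity in $(Q_2,\ell,\sigma)$, although this step was already carried out inside the proof of Theorem \ref{thm:5}, so strictly speaking I only need to record that $([0,1],d_2^*,f_2)$ is chaotic.

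Next, I would observe that by Theorem \ref{thm:4}, $\mathfrak{I}_2=([0,1],d_2^*)$ is an ultracontinuum of $\mathbf{I}=([0,1],d)$. This is the bridge that transports the chaotic behavior from the exotic finer topology back down to the Euclidean topology on the unit interval. Combined with the fact that the $2$-tent map $f_2$ is continuous on $\mathbf{I}$ (which is clear by inspection of its piecewise-linear definition in Eq.~12), the hypotheses of Lemma \ref{lem:4} are met, and Lemma \ref{lem:4} immediately delivers that $([0,1],d,f_2)$ is chaotic.

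Because every step here is already packaged as a lemma, remark, or theorem, there is no genuine obstacle; the one place a reader might pause is in confirming that $f_2$ is continuous in the Euclidean topology on $\mathbf{I}$, so I would point this out explicitly rather than leaving it implicit. If desired, one could also spell out the chain alternatively as Remark \ref{rem:2} $\to$ Lemmas \ref{lem:2} and \ref{lem:3} $\to$ Theorem \ref{thm:1} instead of a single appeal to Lemma \ref{lem:4}, but the Lemma \ref{lem:4} route is the shortest and keeps the proof to a few lines, which suits a corollary.
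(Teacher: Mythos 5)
Your proposal is correct and follows essentially the same chain as the paper's own proof: Theorem \ref{thm:5} plus Remarks \ref{rem:6} and \ref{rem:2} to transfer transitivity and dense periodic orbits to $([0,1],d_2^*,f_2)$, the Banks-Brooks-Cairns-Davis-Stacey theorem to upgrade this to chaoticity there, and then Theorem \ref{thm:4} with Lemma \ref{lem:4} to descend to $\mathbf{I}$. Your explicit remark that $f_2$ is continuous on $\mathbf{I}$ (needed for Lemma \ref{lem:4}) is a small but welcome addition that the paper leaves implicit.
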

\begin{proof}
    Since $(Q_2,\ell,\sigma)$ is transitive and has dense periodic orbits, $([0,1],d^*_2,f_2)$ is also transitive and has dense periodic orbits via conjugacy (Remark \ref{rem:2}), which by the Banks-Brooks-Cairns-Davis-Stacey theorem\cite{Banks1992} indicates that $([0,1],d^*_2,f_2)$ is sensitive and therefore chaotic. And since $\mathfrak{I}_2 = ([0,1],d^*_2)$ is an ultracontinuum of $\mathbf{I} = ([0,1],d)$, $([0,1],d,f_2)$ is also chaotic (Lemma \ref{lem:4}).
\end{proof}
Through further use of conjugacy arguments and the Banks-Brooks-Cairns-Davis-Stacey theorem, one can demonstrate that other classical examples of discrete dynamical systems are mixing and/or chaotic, such as the logistic/Ulam map\cite{ChaosBook}.
\section{Conclusions}
The examples in Sections \ref{sec:mixing} and \ref{chaos} demonstrate that a qualitative framework exists for understanding the dynamics of a continuum that 1) both allows a unique qualitative description for all elements in the continuum and 2) exploits topological conjugacy to prove the existence of mixing and/or chaos everywhere in that continuum. And although these examples demonstrate it for a one-dimensional portion of Euclidean space, one can imagine a similar approach for discrete dynamical systems defined on higher-dimensional continua such as the unit plane or cube that differs little from the schemes described in either section.

The fact that this framework requires the use of sets of infinite symbol sequences that are not within the scope of "standard" symbolic dynamics (see Lemma \ref{lem:1} and its corollary) will hopefully spur interest in dynamicists and combinatorialists on words to study these $\omega$-languages and their use in the field of qualitative dynamics, and to introduce discussions of such systems in introductory texts on the subject.

Similarly, it may be the case that ultracontinua as metric/topological spaces are interesting on their own; although it seems that there is a great deal of leeway in terms of the topological properties of ultracontinua, Lemma \ref{lem:6} demonstrates that there is at least one topological restriction on ultracontinua, and this restriction (their lack of compactness) may make them more pathological than expected. That topological leeway can also easily generate seemingly counter-intuitive properties; for example, the ultracontinua described in Sections \ref{sec:mixing} and \ref{chaos} are zero-dimensional, while the continuum whose topology they contain is one-dimensional (with respect to the notions of Lebesgue covering dimension and both small \& large inductive dimension). 

\begin{acknowledgments}
The author wishes to thank Doğa Yucalan, William Clark, Steven H. Strogatz, and John Guckenheimer for helpful feedback. The author also wishes to acknowledge the American Cancer Society Hope Lodge in New York City, where part of the contents of this paper were conceived/written.
\end{acknowledgments}

\section*{Data Availability Statement}

Data sharing is not applicable to this article as no new data were created or analyzed in this study.

\appendix

\section{Topology \& Topological Properties}

The study of topology begins by endowing some base set $X$ with "structure", characterized by a family of specific subsets of $X$, which (subject to some restrictions) are considered a topology of $X$. Those restrictions are formalized below:
\begin{definition}
Consider a set $X$. A topology $\tau$ of $X$ is a set of subsets of $X$ that satisfies the following three restrictions:
\begin{enumerate}
    \item $X$ and the empty set $\emptyset$ are members of $\tau$.
    \item Any arbitrary union of members of $\tau$ is a member of $\tau$.
    \item The intersection of a finite number of members of $\tau$ is a member of $\tau$.
\end{enumerate}
The $2$-tuple $(X,\tau)$ is collectively considered a topological space. The elements of $\tau$ are called termed the open sets of $(X,\tau)$.
\end{definition}
In this paper, rather than dealing with topological spaces directly, the discussion is centered around the notion of a "metric space" and of a specific topology associated with them:
\begin{definition}
Consider a set $X$. A metric $d$ on $X$ is a mapping that takes any two elements of $X$ as input and outputs a non-negative number representing distance, subject to the following restrictions:
\begin{enumerate}
    \item $d(x,x) = 0$ for any $x \in X$.
    \item $d(x,y) \neq 0$ for any distinct $x, y \in X$.
    \item $d(x,y) = d(y,x)$ for any $x, y \in X$.
    \item $d(x,z) \leq d(x,y) + d(y,z)$ for any $x, y, z \in X$.
\end{enumerate}
The $2$-tuple $(X,d)$ is collectively considered a metric space. 
\newline 

An open ball of $(X,d)$ is a subset of $X$ that can be fully described as "the set of all elements $y \in X$ for which $d(x,y) < \epsilon$ given some $x\in X$ and some $\epsilon > 0$". 
\newline 

The metric topology $\tau_d$ of a metric space $(X,d)$ is the topology generated by the set of all open balls in $(X,d)$ and all possible unions of them.
\end{definition}
Usually (and as is the case in this paper), the topology of a metric space $(X,d)$ is taken for granted to be the metric topology induced by $d$.

There are many properties that one can use to describe topologies and topological spaces. This paper uses two properties whose use is ubiquitous in general topology; compactness and connectedness.
\begin{definition}
Consider a topological space $(X,\tau)$. An open cover $C$ of $X$ is a set of open sets (elements of $\tau$) whose union contains the entirety of $X$. A subcover of $C$ is a subset of $C$ which is still an open cover of $X$.
\newline 

$(X,\tau)$ is compact if and only if every open cover of $(X,\tau)$ with an infinite number of elements (open sets) has a finite subcover.
\end{definition}
There are many notions of compactness that are equivalent on metric spaces (or, to be precise, on topological spaces with a metric topology) which are not relevant to this text; however, the reader is encouraged to read the cited texts\cite{dugundji_1966,steen2013counterexamples,Rudin1976-vd,armstrong2013basic,munkres2000topology} for more information on the role of compactness in topological and metric spaces.

Another key topological property used in this text is the notion of connectedness and disconnectedness:
\begin{definition}
Consider a topological space $(X,\tau)$. $(X,\tau)$ is connected if and only if there do not exist open sets $U, V \in \tau$ such that $U \cup V = X$ and $U \cap V = \emptyset$. If $(X,\tau)$ is not connected, it is said to be disconnected.
\newline 

$(X,\tau)$ is termed totally disconnected if and only if, for any subset $Y \subseteq X$ with more than one element, there exist two open sets $U, V \in \tau$ such that $(U\cap Y)\cup(V\cap Y) = Y$ and $(U\cap Y)\cap(V\cap Y) = \emptyset$.
\end{definition}
Connectedness (and its absence) also plays a key role in the study of general topology; see any of the cited texts\cite{dugundji_1966,steen2013counterexamples,armstrong2013basic,munkres2000topology} for discussions of these properties.

It is also often the case that one wants to develop an understanding of mappings that "warp" topological spaces in a way that "preserves their structure". These mappings are called homeomorphisms:
\begin{definition}
    Consider any two topological spaces $(X_1,d_1)$ and $(X_2,d_2)$. Suppose there existed a bijection, or one-to-one correspondence, $h: X_1 \to X_2$ between the elements of $X_1$ and $X_2$. $h$ is then a homeomorphism if and only if $h$ also defines a bijection between elements of $\tau_1$ and $\tau_2$. 
    \newline
    
    To be more precise, consider the auxiliary mapping $h^\tau$ from $\tau_1$ to the set of all subsets of $X_2$, defined such that $h^\tau(U) = \bigcup_\alpha h(u_\alpha)$ for any $U \in \tau_1$ and every element $u_\alpha \in U$. $h$ is a homeomorphism if and only if $h^\tau$ is a bijection between $\tau_1$ and $\tau_2$.
    \newline
    
    Two topological spaces are considered homeomorphic if and only if there exists a homeomorphism between them. Two metric spaces are considered homeomorphic if and only if there exists a homeomorphism between their underlying sets when equipped with their metric topologies.
\end{definition}
If two topological spaces are homeomorphic, their structures are considered to be "topologically equivalent", in the sense that there exists a one-to-one correspondence between both the underlying set and their respective topologies. Perhaps unsurprisingly, compactness, connectedness, and its absence are all conserved after transforming a topological space with a homeomorphism:
\begin{remark}
\label{rem:7}
    Consider two topological spaces $(X_1,\tau_1)$ and $(X_2,\tau_2)$. If $(X_2,\tau_2)$ is homeomorphic to $(X_1,\tau_1)$ and $(X_1,\tau_1)$ is compact/connected/disconnected/totally disconnected, $(X_2,\tau_2)$ is correspondingly compact/connected/disconnected/totally disconnected.
\end{remark}
Remark \ref{rem:7} has a side-effect which is used centrally in the main discussion of this paper; if two topological spaces have incompatible topological properties, then there cannot exist a homeomorphism between the two topological/metric spaces.
%This signals that all following section commands refer to appendixesinstead of regular sections. Therefore, the \verb+\appendix+ command
% should be used only once---to set up the section commands to act as
%appendixes. Thereafter normal section commands are used. The heading
%for a section can be left empty. For example,
% \begin{verbatim}
% \appendix
% \section{}
% \end{verbatim}
% will produce an appendix heading that says ``APPENDIX A'' and
% \begin{verbatim}
% \appendix
% \section{Background}
% \end{verbatim}
% will produce an appendix heading that says ``APPENDIX A: BACKGROUND''
% (note that the colon is set automatically).

% If there is only one appendix, then the letter ``A'' should not
% appear. This is suppressed by using the star version of the appendix
% command (\verb+\appendix*+ in the place of \verb+\appendix+).

% \section{A little more on appendixes}

% Observe that this appendix was started by using
% \begin{verbatim}
% \section{A little more on appendixes}
% \end{verbatim}

% Note the equation number in an appendix:
% \begin{equation}
% E=mc^2.
% \end{equation}

% \subsection{\label{app:subsec}A subsection in an appendix}

% You can use a subsection or subsubsection in an appendix. Note the
% numbering: we are now in Appendix~\ref{app:subsec}.

% \subsubsection{\label{app:subsubsec}A subsubsection in an appendix}
% Note the equation numbers in this appendix, produced with the
% subequations environment:
% \begin{subequations}
% \begin{eqnarray}
% E&=&mc, \label{appa}
% \\
% E&=&mc^2, \label{appb}
% \\
% E&\agt& mc^3. \label{appc}
% \end{eqnarray}
% \end{subequations}
% They turn out to be Eqs.~(\ref{appa}), (\ref{appb}), and (\ref{appc}).

% \nocite{*}
\bibliography{aipsamp}% Produces the bibliography via BibTeX.

\end{document}